\documentclass[final, a4paper, 10pt]{article}

\usepackage[scale=0.768]{geometry}
\usepackage[english]{babel}

\usepackage[mathscr]{euscript}
\usepackage{color}
\usepackage{fancyhdr}
\usepackage{bm}
\usepackage{hyperref}
\usepackage{cite}
\usepackage{showkeys}
\usepackage{subfigure}
\usepackage{float}

\usepackage{amsfonts}
\usepackage{amsmath}
\usepackage{amssymb}
\usepackage{amscd}
\usepackage{amsthm}
\usepackage[utf8]{inputenc}
\usepackage[T1]{fontenc}
\usepackage{accents}
\usepackage{graphicx}
\usepackage{array}

\usepackage{algorithm}
\usepackage{algorithmicx}
\usepackage{algpseudocode}
\usepackage{mathtools}
\DeclarePairedDelimiter\ceil{\lceil}{\rceil}

\newtheorem{theorem}{Theorem}
\newtheorem{definition}[theorem]{Definition}
\newtheorem{lemma}[theorem]{Lemma}
\newtheorem{corollary}[theorem]{Corollary}

\newtheorem{example}[theorem]{Example}
\newtheorem{remark}[theorem]{Remark}
\newtheorem{assumption}[theorem]{Assumption}

\newcommand*{\N}{\ensuremath{\mathbb{N}}}
\newcommand*{\Z}{\ensuremath{\mathbb{Z}}}
\newcommand*{\R}{\ensuremath{\mathbb{R}}}
\newcommand*{\C}{\ensuremath{\mathbb{C}}}
\renewcommand{\i}{\mathrm{i}}
\renewcommand{\phi}{\varphi}
\renewcommand{\rho}{{\varrho}}
\renewcommand{\epsilon}{{\varepsilon}}

\renewcommand{\d}[1]{\,\mathrm{d}#1 \,}

\newcommand{\D}{\mathcal{D}}



\newcommand{\T}{{\mathcal{T}}}

\newcommand{\V}{{\mathcal{V}}}

\newcommand{\K}{{\mathcal{K}}}

\newcommand{\p}{{per}}
\renewcommand{\L}{\mathcal{L}} 
\renewcommand{\Re}{\mathrm{Re}\,}
\renewcommand{\Im}{\mathrm{Im}\,}

\newcommand{\M}{{\mathcal{M}}}

\newcommand{\FF}{\mathbb{F}}
\newcommand{\UF}{\mathbb{UF}}


\newlength{\dhatheight}

\usepackage{color}
\newcommand{\high}[1]{{\color{red}{#1}}}
\definecolor{xl}{rgb}{0.8,0.2,0.3}

\setlength{\unitlength}{1cm}

\usepackage{amsmath}
\usepackage{amsfonts}
\usepackage{amssymb}
\begin{document}
	
	\sloppy
	
\title{A spectral decomposition method  to approximate DtN maps in complicated waveguides}
\author{
Ruming Zhang\thanks{Institute of Applied and Numerical mathematics, Karlsruhe Institute of Technology, Karlsruhe, Germany
; \texttt{ruming.zhang@kit.edu}. }}
\date{}
\maketitle
	
\begin{abstract}In this paper, we propose a new spectral decomposition method to simulate  waves propagating in complicated waveguides. For the numerical solutions of waveguide scattering problems, an important task is to approximate the Dirichlet-to-Neumann map efficiently. From previous results, the physical solution can be decomposed into a family of generalized eigenfunctions, thus we can write the Dirichlet-to-Neumann map explicitly by these functions. From the exponential decay of the generalized eigenfunctions, we approximate the Dirichlet-to-Neumann (DtN) map by a finite truncation and the approximation is proved to converge exponentially. With the help of the truncated DtN map, the unbounded domain is truncated into a bounded one, and a variational formulation for the problem is set up in this bounded domain. The truncated problem is then solved by a finite element method. The error estimation is also provided for the numerical algorithm and numerical examples are shown to illustrate the efficiency of the algorithm.\\

 \noindent
 {\bf Keywords: spectral decomposition, complicated waveguides, Dirichlet-to-Neumann map}

\end{abstract}

\section{Introduction}

The numerical simulation of wave propagating in complicated unbounded waveguides is a challenging task, due to the existence of guided waves. To obtain the physical solution, the limiting absorption principle is always a standard way. 
In the past decades, a number of mathematicians as well as scientists from other disciplines have been working on this topic and several numerical methods have been proposed. In \cite{Joly2006}, the authors developed a numerical method to approximate the Dirichlet-to-Neumann maps in periodic waveguides, based on a numerical solution of an operator valued Riccati equation. The method is well-known and applied to other related topics in \cite{Fliss2009a,Fliss2013}. We would like to mention that in \cite{Fliss2021a}, the authors studied the same problem as in this paper with their method. Another important method, which is called the recursive doubling procedure, was proposed in \cite{Yuan07} first for exponential decaying solutions first. The method is later extended to further topics in \cite{Ehrhardt2009,Ehrhardt2009a} and the cases with guided waves are also included.

In recent years, mathematicians have made great improvements in the analysis of physical solutions in periodic waveguides. The structures of the are described by the radiation conditions, see \cite{Hoang2011,Fliss2015,Kirsc2017a,Hohag2013} for details. Although there are several versions of radiation conditions, they are equivalent in principle. Generally speaking, a physical solution is composed of a finite number of propagating modes and an evanescent part. With this property, a Bloch wave decomposition method was proposed in \cite{Dohna2018} for physical solutions in the joint of two different periodic half waveguides. In \cite{Zhang2019b,Zhang2021a}, the author also proposed the numerical method for (locally perturbed) periodic waveguides based on the Floquet-Bloch transform.

In this paper, we propose a spectral decomposition method to approximate the DtN map, which is given explicitly by the radiation condition, and then solve this problem numerically. This method is based on the spectral analysis of the translation operator for the physical solutions in periodic waveguides, in the author's previous paper \cite{Zhang2019a}. In this paper, the physical solution is decomposed into a finite number of propagating modes and a countable number of evanescent modes, which are generalized eigenfunctions for quasi-periodic problems. More precisely, compared to the existing results, the evanescent part of the physical solution can also be decomposed into a discrete set of evanescent modes. The evanescent modes decays exponentially, and the decay rate is determined by the corresponding eigenvalues. From the results in \cite{Hohag2013}, a distribution of the eigenvalues are studied and we can get a very accurate estimation of the number of generalized  eigenfunctions and their decay rate. These two papers  inspire the new method in this paper.

From the decomposition of the physical solutions, we  write out the Dirichlet-to-Neumann map explicitly by the generalized eigenfunctions. Then we approximate the Dirichlet-to-Neumann map by a finite truncation and the approximation also converges to the exact one exponentially. Then the domain is truncated into a bounded one from the Dirichlet-to-Neumann map, and a variational formulation for the problem is set up in this bounded domain. Finally we solve the problem by a finite element method. The error estimation is also provided for the numerical algorithm.

Note that this method was mentioned in Section 4.4, \cite{Ehrhardt2009}. However, the authors didn't work on this method due to the lack of results at that time. They also proposed two disadvantages according to the method. First, it is not easy to have an accurate approximation of small Floquet multipliers; second, the generalized eigenfunctions are not orthogonal and this makes it difficult to formulate the Dirichlet-to-Neumann map. Actually, since the generalized eigenfunctions decay very fast, we only need a small number of Floquet multipliers  and generalized eigenfunctions. Thus the difficulties make no problem in this case. We will discuss this in Section 3.3 in detail.

The rest of the paper is organized as follows. In the second section, the mathematical model for the problem and also important definitions and notations are introduced. In Section 3, some important results for periodic waveguides are recalled. With these information, the variational formulation is set up for the solution in Section 4. The numerical scheme and error estimation are organized in Section 5, and some numerical examples are shown in Section 6.

\section{Mathematical model and notations}

We consider waves propagating in a complicated waveguide $\Omega$. Suppose $\Omega$ is composed of three parts, a left half guide $\Omega_-:=(-\infty,-R)\times(-a,a)$, a right half guide $\Omega_+:=(R,+\infty)\times(-b,b)$  and $\Omega_0$ the part that joints $\Omega_-$ and $\Omega_+$.  Note that the structures of $\Omega_-$ and $\Omega_+$ are not necessarily the same. The domain $\Omega_0$ is assumed to be connected and its boundary $\partial\Omega_0$ is composed of finite number of Lipschitz continuous closed curves. For a visualization we refer to Figure \ref{fig:sample}.

\begin{figure}[H]
\begin{center}
 \includegraphics[width=0.9\textwidth]{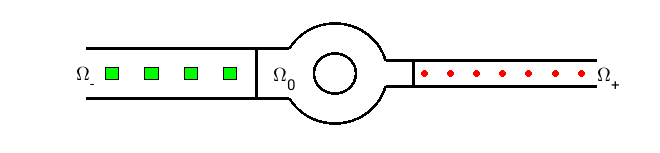}
 \caption{Structure of the complicated waveguide}
 \label{fig:sample}
 \end{center}
\end{figure}

The problem is modeled by the following Helmholtz equation:
\begin{equation}
 \label{eq:waveguide}
 \Delta u+k^2q u=f\text{ in } \Omega;\quad u=0\text{ on }\partial\Omega.
\end{equation}
The source term $f$ is assumed to be in $L^2(\Omega)$ and also compactly supported in $\Omega_0$. The refractive index $q\in L^\infty(\Omega)$ and is strictly positive:
\[
 q(x)\geq c>0\quad\text{ for all }\quad x\in\Omega.
\]
Moreover, the function $q$ is periodic in $\Omega_-$ and $\Omega_+$ in $x_1$-direction. For simplicity, let 
\[
 q(x)=\begin{cases}
       q_-(x),\quad\text{ when }x\in\Omega_-;\\
       q_+(x),\quad\text{ when } x\in\Omega_+;
      \end{cases}
\]
where $q_-$ is $L_-$-periodic and $q_+$ is $L_+$-periodic. Here the structures of $q_-$ and $q_+$ can be different. We denote the following periodicity cells:
\[
 \Omega^+_j:=(R+jL_+,R+(j+1)L_+)\times(-b,b)\quad\text{ and }\quad \Omega^-_j:=(-R+(j-1)L_-,-R+jL_-)\times(-a,a),
\]
where $ j=0,1,\dots,\infty$.
Define the line segments
\[
 \Gamma^+_j:=\{R+j L_+\}\times[-b,b]\quad\text{ and }\Gamma^-_j:=\{-R+(j-1) L_-\}\times[-a,a];\quad j=0,1,\dots.
\]
Then $\Gamma^+_{j}$ is the left boundary of $\Omega^+_j$ and $\Gamma^+_{j+1}$ is its right boundary; $\Gamma^-_{j}$ is the left boundary of $\Omega^-_j$ and $\Gamma^-_{j+1}$ is its right boundary. The half guides are composed by the cells:
\[
 \overline{\Omega_+}=\cup_{j=0}^\infty \overline{\Omega_j^+};\quad\overline{\Omega_-}=\cup_{j=-\infty}^0\overline{\Omega_j^-}.
\]
For simplicity, we can also extend $\Omega_+$ and $\Omega_-$ to the full guide
\[
 \widetilde{\Omega}_+:=\R\times(-b,b),\quad\widetilde{\Omega}_-:=\R\times(-a,a).
\]
Thus
\[
 \overline{\widetilde{\Omega}_+}=\cup_{j\in\Z} \overline{\Omega_j^+};\quad\overline{\widetilde{\Omega}_-}=\cup_{j\in\Z}\overline{\Omega_j^-}.
\]
At the same time, we can also extend $q_+$ and $q_-$ periodically into the full guides $\widetilde{\Omega}_+$ and $\widetilde{\Omega}_-$ and the extended functions are still denoted by $q_+$ and $q_-$.

We can also extend our problem to different boundary conditions and the Dirichlet boundary condition is just chosen as an example. Due to the existence of guided modes, the well-known {\em Limiting Absorption Principle (LAP)} is applied to the problem to get the unique physical solution. That is, we replace $k^2$ by $k^2+\i\epsilon$ for any $\epsilon>0$ in \eqref{eq:waveguide} and obtain the unique solution $u_\epsilon\in \widetilde{H}^1(\Omega)$ (here $\widetilde{H}^1(\Omega)$ is the subspace of $H^1(\Omega)$ with homogeneous Dirichlet data on $\partial\Omega$). Let $\epsilon\rightarrow 0^+$, then the limit of $u_\epsilon$ in $\widetilde{H}^1_{loc}(\Omega)$, which is called an LAP solution in this paper, is the solution we would like to simulate numerically. Especially, when $f\in L^2(\Omega_0)$ and $q\in L^\infty(\Omega)$, the function $u\in \widetilde{H}^2_{loc}(\Omega)$. 

To describe the solution obtained by the LAP, it is essential to introduce the radiation conditions in the half-guides $\Omega_-$ and $\Omega_+$. The DtN maps on the left (right) boundary of $\Omega_+$ ($\Omega_-$), which are defined by the radiation conditions,   play important roles. Thus we need to introduce some important definitions and results related to the DtN maps. 

\section{Periodic waveguide problem}

In this section, we  focus on the problem defined in a reference periodic waveguide $W:=\R\times(0,1)$ with the boundary $\partial W$. A number of radiation conditions have been introduced (see \cite{Hoang2011,Fliss2015,Kirsc2017a}) to characterize the LAP solutions and they are equivalent in principle. We first introduce the cell problems, and then conclude the radiation condition briefly. In the second part, we recall important spectral decomposition of the problem introduced in \cite{Zhang2019a}. At the end, we will estimate the distribution of the eigenvalues of the periodic waveguide.  We begin with the following problem:
\begin{equation}\label{eq:waveguideW}
 \Delta u+k^2 q u=g\text{ in }W;\quad u=0\text{ on }\partial W.
\end{equation}
The refractive index $q$ is strictly positive and periodic in $x_1$-direction:
\[
q(x)\geq c>0,\quad q(x_1+1,x_2)=q(x_1,x_2),\quad\forall\,x=(x_1,x_2)\in W.
\]
For simplicity, we also introduce the following notations. The periodicity cells and edges are denoted by:
\[
 W_j:=(j,j+1)\times(0,1);\quad \Gamma_j:=\{j\}\times(0,1).
\]

\subsection{Cell problems and the radiation condition}

We focus on a reference periodicity cell $W_0$ and  let  $\Sigma_-:=(0,1)\times\{0\}$ and $\Sigma_+:=(0,1)\times\{1\}$  be its lower and upper edges. We define space of $z$-quasi-periodic functions by:
\[
\V_z:= \widetilde{H}^1_z(W_0)=\left\{\phi\in \widetilde{H}^1(W_0):\,\phi\big|_{\Gamma_1}=z\phi\big|_{\Gamma_0}\right\}
\]
for any $z\in\C$. For any $z\in \C$, consider  the following problem in $\V_z$:
\begin{equation}
\label{eq:eig}
  \Delta w(z,x)+k^2 q w(z,x)=0\text{ in }W_0.
\end{equation}

Sometimes it is more convenient to use $\alpha:=-\i\log(z)$ instead of $z$. Here the logarithm function takes value in the branch cut $\left(-{\pi},{\pi}\right]+\i\R$. Then $|z|>1$ corresponds to $\Im(\alpha)<0$, and $|z|<1$ corresponds to $\Im(\alpha)>0$, and $|z|=1$ corresponds to $\alpha\in\left(-{\pi},{\pi}\right].$ 
Let  $V_\alpha:=\V_z$ when $z=e^{\i\alpha}$. For $\alpha=0$, the space $V_0$ is composed of periodic functions thus we let $V_\p:=V_0=\V_1$. Since the domains $W_n$ are with the same shape for all $n\in\Z$, we identify $\widetilde{H}^1_z(W_n)$ with $V_\alpha=\V_z$. With the same reason, we also identify $H^s(\Gamma_n)$ with $H^s(\Gamma_0)$ for $s=\pm 1/2$.  {Moreover, we also let $V:=\widetilde{H}^1(W_0)$.}

\begin{remark}
From now on, we use the notations $w(z,x)$ and $w(\alpha,x)$ to indicate the same function and this implies that $z=e^{\i\alpha}$.  Similarly for  $v_z$ and $v_\alpha$ in the following contents.
\end{remark}

For simplicity, we introduce a periodization technique. For a function $w(\alpha,\cdot)\in V_\alpha$, define:
\[
 v_\alpha(x):=e^{-\i\alpha x_1} w(\alpha,x)\in \,V_\p.
\]
From direct calculation, $v_\alpha$ satisfies
\begin{equation}
 \label{eq:per}
 \Delta v_\alpha+2\i\alpha\frac{\partial v_\alpha}{\partial x_1}+(k^2q-\alpha^2) v_\alpha=e^{-\i\alpha x_1}g(x)\text{ in }W_0;\quad \left.v_\alpha\right|_{\Gamma_1}= \left.v_\alpha\right|_{\Gamma_0}.
\end{equation}
The variational form of the above equation is to find $v_\alpha\in V_\p$ such that
\begin{equation}
 \label{eq:per_var}
 \int_{W_0}\left[\nabla v_\alpha\cdot\nabla\overline{\phi}-2\i\alpha\frac{\partial v_\alpha}{\partial x_1}\overline{\phi}-(k^2q-\alpha^2) v_\alpha\overline{\phi}\right]\d x=-\int_{W_0}e^{-\i\alpha x_1}g(x)\overline{\phi}(x)\d x
\end{equation}
holds for any $\phi\in V_\p$. This problem can be written as 
\begin{equation}
\label{eq:per_operator}
 \left<A_\alpha v_\alpha,\phi\right>_{V_\p}=\left<F_\alpha g,\phi\right>_{V_\p}
\end{equation}
where $A_\alpha:\,V_\p\rightarrow V_\p$ is a Fredholm operator depends analytically on $\alpha\in \C$ (see \cite{Kirsc2017a}) and $F_\alpha:\, L^2(W_0)\rightarrow V_\p$ depends analytically on $\alpha$. Moreover, $A_\alpha$ is self-adjoint when $\alpha$ is real.

From the analytic Fredholm theorem (Theorem VI.14, \cite{Reed1980}), all the points $z\in\C$ such that \eqref{eq:eig} has nontrivial solutions in $\V_z$ (or equivalently, those $\alpha=-\i\log(z)$ where $A_\alpha$ is not invertible in $V_\p$) compose a discrete set $\FF\subset C_\times:=\C\setminus\{0\}$, then $v$ depends analytically on $z\in\C_\times\setminus\FF$ and meromorphically on $z\in\C_\times$. For details we refer to Theorem 9 in \cite{Zhang2019a}. In the following, we introduce the set $\FF$ in details.

\begin{itemize}
 \item Define the set $$\UF:=\left\{z\in\FF:\,|z|=1\right\},$$ then it is  finite (can be empty).    Suppose $z\in\UF$, the nontrivial solution  $w(z,\cdot)\in \V_z$ is {\em a propagating mode or a  Bloch wave}. We recall some important facts introduced in \cite{Kirsc2017a}. Suppose for a fixed $z\in\UF$, $\mathcal{N}$ is the space spanned by all the nontrivial solutions of \eqref{eq:eig} in $\V_z$. Then $\mathcal{N}$ is a finite dimensional space with the dimension $m$. There is an orthonormal basis of $\mathcal{N}$, denoted by $\left\{\phi_j\in\mathcal{N}:\,j=1,2,\dots,m\right\}$ such that the following equations hold:
\begin{equation}
 \label{eq:ortho}
 -\i\int_{W_0}\frac{\partial\phi_j}{\partial x_1}\overline{\psi}\d x=k \lambda_j \int_{W_0}n\phi_j\overline{\psi}\d x\,\forall\,\psi\in\mathcal{N};\quad k\int_{W_0}n\phi_j\overline{\phi_{j'}}\d x=\delta_{j,j'}\,\forall\,j,j'=1,\dots,m,
\end{equation}
where $\lambda_j\in\R$ and $\delta_{j,j'}$ is the Kronecker delta function. In particular, when $\psi=\phi_j$,
\begin{equation}
\label{eq:lambdaj}
 -\i\int_{W_0}\frac{\partial\phi_j}{\partial x_1}\overline{\phi_j}\d x=\lambda_j.
\end{equation}
We can also decide the direction that $\phi_j$ propagates from the sign of the  parameter $\lambda_j$. When $\lambda_j>0$, $\phi_j$ propagates to the right; when $\lambda_j<0$, $\phi_j$ propagates to the left; while when $\lambda_j=0$, $\phi_j$ is a standing wave which has to be avoided (see Assumption \ref{asp1}).

\item Suppose $z\in\FF$ and $|z|<1$, then the nontrivial solution $w(z,\cdot)\in \V_z$  can be extended $z$-quasi-periodically to a solution in the full guide $W$ that decays exponentially when $x_1\rightarrow\infty$. Let $\mathcal{N}$ be the space spanned by all the generalized eigenfunctions of \eqref{eq:eig} in $V_z$ with the dimension $m$. Then it is spanned by the linear independent vectors $\phi_{1},\,\dots,\phi_{m}$, and the residue
\[
 \oint_{C_z}w(z,x)\d z=\sum_{\ell=1}^{m} c_\ell\,\phi_{\ell}(x)
\]
where $c_\ell\in\C$ are the coefficients and $C_z$ is the counterclockwise circle with center $z$ which encircles only one element $z\in\FF$.

\item When $z\in\FF$ with $|z|>1$, everything is similar but the corresponding generalized eigenfunctions are decaying exponentially when $x_1\rightarrow-\infty$.

\end{itemize}

We need the following assumption to guarantee that the LAP works.

\begin{assumption}
 \label{asp1}
 Assume that for $k$ and $q$ appear in this paper, there is no standing wave in either $\Omega_-$ or $\Omega_+$.
\end{assumption}


Finally we introduce some useful notations. Since $\UF$ is symmetric, let
\[
 \UF_+=\{z_1^+,z_2^+,\dots,z_{J}^+\};\quad  \UF_-=\{z_1^-,z_2^-,\dots,z_{J}^-\}
\]
where $z_j^+=\overline{z_j^-}$. Moreover, $z_j^+$ is associated with a rightward propagating wave and $z_j^-$ is associated with a leftward propagating wave. Note that it is possible that $\UF_+\cap\UF_-\neq\emptyset$. Then we define the sets 
\[
 \FF_+=\{z_1^+,z_2^+,\dots,z_{J}^+,z_{J+1}^+,\dots\};\quad  \FF_-=\{z_1^-,z_2^-,\dots,z_{J}^-,z_{J+1}^-,\dots\},
\]
where $\FF=\FF_+\cup\FF_-$. Moreover, $\left|z_j^+\right|<1$ and $\left|z_j^-\right|>1$ for all $j\geq J+1$, and the points are ordered by:
\[
\infty>\cdots\geq \left|z_{J+2}^-\right|\geq \left|z_{J+1}^-\right|> 1>\left|z_{J+1}^+\right|\geq \left|z_{J+2}^+\right|\geq \cdots>0.
\]
For each $z_j^\pm$ ($j\geq 1$), let $\mathcal{N}_j^\pm$ be the space spanned by all the nontrivial solutions of \eqref{eq:eig} in $\V_{z_j^\pm}$ with the dimension $m_j^\pm$. We also assume that the spaces are spanned by the following linear independent basis:
\[
 \mathcal{N}_j^+:={\rm span}\left\{\phi_{j,1}^+,\,\phi_{j,2}^+,\dots,\phi_{j,m_j^+}^+\right\};\quad \mathcal{N}_j^-:={\rm span}\left\{\phi_{j,1}^-,\,\phi_{j,2}^-,\dots,\phi_{j,m_j^-}^-\right\}.
\]

\subsection{ Spectral decomposition of the LAP solution}

In this subsection, we recall the spectral decomposition of the LAP solution developed in \cite{Zhang2019a}.  We recall the decomposition of the solution in the following theorem.

\begin{theorem}[Theorem 27, \cite{Zhang2019a}]
 \label{th:spec_decomp}
 Suppose Assumption \ref{asp1} holds. Then for $n\geq 1$,
 \begin{eqnarray}
  \label{eq:spec_decomp+}
  &&u(x_1+n,x_2)=
  \sum_{j=1}^{J}\sum_{\ell=1}^{m_j^+} \frac{\left<g,\phi_{j,\ell}^+\right>}{2\lambda^+_{j,\ell}}e^{\i n\alpha_j^+}\phi_{j,\ell}^+(x)+\sum_{j=J+1}^\infty {\rm Res}\left(w(z,x)z^{n-1},z=z_j^+\right);\\
  \label{eq:spec_decomp-} &&u(x_1-n,x_2)=
  \sum_{j=1}^{J}\sum_{\ell=1}^{m_j^-} \frac{\left<g,\phi_{j,\ell}^-\right>}{2\lambda^-_{j,\ell}}e^{-\i n\alpha_j^-}\phi_{j,\ell}^-(x)+\sum_{j=J+1}^\infty {\rm Res}\left(w(z,x)z^{-n-1},z=z_j^-\right),  
 \end{eqnarray}
where  $\lambda_{j,\ell}^\pm$ is obtained from \eqref{eq:lambdaj}, and the residue is defined as:
\[
 {\rm Res}\left(w(z,x)z^{m},z=z_0\right):=\oint_{|z-z_0|=\delta}w(z,x)z^{m}\d z,
\]
and $w(z,x)$ satisfies \eqref{eq:per} with $z=e^{\i\alpha}$ and $f$ be replaced by $g$, the disk with center $z_0$ and radius $\delta>0$ only contains one pole $z_0\in\FF$.

\end{theorem}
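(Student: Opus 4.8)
\textbf{Proof proposal for Theorem~\ref{th:spec_decomp}.}

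The plan is to reconstruct the LAP solution $u$ in the half-guide cell by cell from the Floquet--Bloch (or, equivalently here, the translation-operator) representation, and then collapse the contour integrals into residues. First I would recall that the LAP solution in $\widetilde{\Omega}_+$ satisfies the periodic waveguide equation \eqref{eq:waveguideW} with source $g$ supported in a fixed cell, so that restricting to cells $W_n$ with $n\ge 1$ we are in the source-free regime. The key analytic object is the family $z\mapsto v_z$ (equivalently $w(z,\cdot)$) solving \eqref{eq:per}/\eqref{eq:per_operator}, which by the analytic Fredholm theorem is meromorphic on $\C_\times$ with poles exactly on $\FF$. The standard contour-integral / inverse-Floquet formula writes $u$ on cell $n$ as a line integral of $w(z,x)z^{n-1}$ over (a deformation of) the unit circle; I would take this representation --- which is precisely the content of \cite{Zhang2019a} underlying this theorem --- as the starting point, since the excerpt permits me to invoke results stated earlier.

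Next I would deform the contour. The integral over the unit circle picks up contributions from the poles on $|z|=1$ (the propagating modes $z_j^\pm$, $j=1,\dots,J$) and, upon pushing the contour inward toward $z=0$ for $n\ge 1$ so that $z^{n-1}$ decays, from all the interior poles $z_j^+$, $j\ge J+1$. Here Assumption~\ref{asp1} is essential: it guarantees that no pole sits in a degenerate ($\lambda=0$) configuration on the unit circle, so each unit-circle pole is simple in the relevant sense and the limiting absorption deformation selects the correct side (interior for the rightward cells). The half-residue at each $z_j^+$ on the unit circle, combined with the orthonormality relations \eqref{eq:ortho}--\eqref{eq:lambdaj}, produces exactly the coefficient $\langle g,\phi_{j,\ell}^+\rangle/(2\lambda_{j,\ell}^+)$ times the quasi-periodic propagator $e^{\i n\alpha_j^+}$: the factor $2\lambda_{j,\ell}^+$ is the normalization constant appearing in the residue of the resolvent of $A_\alpha$ at a simple band crossing, and the factor $e^{\i n\alpha_j^+}=(z_j^+)^n$ records the $z$-quasi-periodic continuation across $n$ cells. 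For the interior poles $z_j^+$ with $j\ge J+1$ the contribution is left in the unevaluated form ${\rm Res}(w(z,x)z^{n-1},z=z_j^+)$, which by the third bullet above lies in $\mathcal{N}_j^+$ and decays like $|z_j^+|^{\,n}$. The argument for $u(x_1-n,x_2)$ is the mirror image, deforming the unit circle outward (toward $\infty$) so that $z^{-n-1}$ decays, and reading off $z_j^-$ with $|z_j^-|>1$.

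The main obstacle I expect is bookkeeping at the unit circle: justifying that the contour may be deformed past $|z|=1$ in the LAP limit, that each propagating pole contributes a \emph{half}-residue with exactly the claimed normalization $1/(2\lambda_{j,\ell}^+)$, and that the sum over the infinitely many interior poles converges in $\widetilde H^1_{\loc}$. Convergence follows from the ordering $|z_{J+1}^+|\ge|z_{J+2}^+|\ge\cdots$ together with a uniform bound on $\|w(z,\cdot)\|$ on a small circle around each pole and the geometric decay of $z^{n-1}$; the half-residue normalization is the genuinely delicate point and is where the self-adjointness of $A_\alpha$ for real $\alpha$ and the relations \eqref{eq:ortho} do the real work --- one expands $A_\alpha^{-1}$ near a real pole $\alpha_j^+$, identifies the rank-$m_j^+$ residue through the basis $\{\phi_{j,\ell}^+\}$, and matches the $\i\epsilon$-regularization to pick the half that the LAP selects. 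Since the statement is quoted verbatim from \cite{Zhang2019a}, I would at this point simply cite that reference for the detailed residue computation rather than reproduce it, and check only that the hypotheses (Assumption~\ref{asp1}, $g\in L^2$ compactly supported in one cell) are exactly those under which it was established.
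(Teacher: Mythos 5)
The paper gives no proof of this statement: it is imported verbatim as Theorem 27 of \cite{Zhang2019a}, so there is nothing internal to compare against. Your contour-deformation/residue sketch is consistent with the standard Floquet--Bloch argument behind that cited result (deform the unit-circle representation inward/outward, full residues at the interior poles, the $1/(2\lambda_{j,\ell}^\pm)$ normalization coming from the group-velocity relations \eqref{eq:ortho}--\eqref{eq:lambdaj} under the LAP selection), and your decision to defer the detailed residue computation to \cite{Zhang2019a} matches exactly what the paper itself does.
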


Choose two values $0<r<1<R$ such that there is no point in $\FF$ that lies on the circles with the center at $0$ and radius $r$ and $R$, i.e., $\{z\in\C:\,|z|=r\}\cap\FF=\{z\in\C:\,|z|=R\}\cap\FF=\emptyset$, we define the following integral for $n\geq 1$:
\[
 I_{n,r}^+\,g:=\oint_{|z|=r} w(z,x)z^{n-1} \d z;\quad  I_{-n,R}^-\,g:=\oint_{|z|=R} w(z,x)z^{-n-1} \d z.
\]
Thus $I_{n,r}^+$ and $I_{-n,R}^-$ are linear operators from $L^2(W_0)$ to $V$. 
The following result comes directly from the residue theorem in Banach spaces.

\begin{corollary}
 \label{cr:decomp_rl}
 Suppose Assumption \ref{asp1} holds. Then the following conditions hold for any $0<r<1<R$ and $n\geq 1$,
 \begin{eqnarray}
  \label{eq:decomp_rl+}
  && u(x_1+n,x_2)=
 \sum_{j=1}^{J_r^+}\sum_{\ell=1}^{m_j^+} c_{j,\ell}^+ e^{\i n\alpha_j^+}\phi_{j,\ell}^+(x)+I_{n,r}^+\,g;\\
  \label{eq:decomp_rl-}&& u(x_1-n,x_2)=
 \sum_{j=1}^{J_R^-}\sum_{\ell=1}^{m_j^-} c_{j,\ell}^- e^{-\i n\alpha_j^-}\phi_{j,\ell}^-(x)+I_{-n,R}^-\,g;
 \end{eqnarray}
where $J_r^+$ and $J_R^-$ are the positive integers such that
\[
 \left|z_{J_r^+}^+\right|> r>\left|z_{J_r^++1}^+\right|,\quad  \left|z_{J_R^-}^-\right|< R<\left|z_{J_R^-+1}^+\right|,
\]
and $c_{j,\ell}^\pm=\frac{\left<g,\phi_{j,\ell}^\pm\right>}{2\lambda_{j,\ell}}$ when $j=1,2,\dots,J$ and $\ell=1,2,\dots,m_j^\pm$.
\end{corollary}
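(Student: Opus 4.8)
The plan is to derive Corollary~\ref{cr:decomp_rl} from Theorem~\ref{th:spec_decomp} by recognizing that the two displayed formulas are nothing but a regrouping of the spectral decomposition according to which poles lie inside the circle $|z|=r$ (respectively outside $|z|=R$). The starting point is formula \eqref{eq:spec_decomp+}: the sum over $j=1,\dots,J$ collects the contributions of the points on the unit circle, and the tail $\sum_{j=J+1}^\infty {\rm Res}\left(w(z,x)z^{n-1},z=z_j^+\right)$ collects the contributions of the points inside the unit disk, ordered by decreasing modulus. Since $|z_{J_r^+}^+|>r>|z_{J_r^++1}^+|$, exactly the points $z_1^+,\dots,z_{J_r^+}^+$ lie in the closed annulus between $|z|=1$ and $|z|=r$ (together with those on $|z|=1$ itself), so I would split the infinite sum at index $J_r^+$: the first $J_r^+-J$ residues get moved into the explicit propagating-plus-first-few-evanescent part, and the remaining residues $\sum_{j=J_r^++1}^\infty$ are precisely the ones enclosed by no contour larger than $|z|=r$.

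The key identification is then that the operator $I_{n,r}^+ g = \oint_{|z|=r} w(z,x) z^{n-1}\,\mathrm{d}z$ equals, by the residue theorem in the Banach space $V$ (applied to the $V$-valued meromorphic function $z\mapsto w(z,x)z^{n-1}$, whose poles in $\C_\times$ form the discrete set $\FF$), the sum of all residues at poles with modulus strictly less than $r$, i.e. $\sum_{j=J_r^++1}^\infty {\rm Res}\left(w(z,x)z^{n-1},z=z_j^+\right)$. Here I would need to justify that the contour integral over $|z|=r$ can be evaluated by summing the enclosed residues even though there are infinitely many of them: this follows because the residues $z_j^+$ accumulate only at $0$ (discreteness of $\FF$ away from $0$, with $0$ the only possible accumulation point given the ordering $|z_{J+1}^+|\ge|z_{J+2}^+|\ge\cdots>0$), and because the partial sums of the residue series converge in $V$ — the latter is exactly the content of the convergence of the series in Theorem~\ref{th:spec_decomp}, or can be obtained by deforming $|z|=r$ inward to a shrinking circle $|z|=\delta$ and estimating $\|w(z,\cdot)\|_V$ near $0$. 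Once this is in hand, I substitute $I_{n,r}^+ g$ for that tail in \eqref{eq:spec_decomp+}, rename $\frac{\left<g,\phi_{j,\ell}^+\right>}{2\lambda_{j,\ell}^+}e^{\i n\alpha_j^+}$ as $c_{j,\ell}^+ e^{\i n\alpha_j^+}$ for $j\le J$ (matching the stated formula for $c_{j,\ell}^\pm$ in that range), and for $J<j\le J_r^+$ simply absorb the individual residue ${\rm Res}(w(z,x)z^{n-1},z=z_j^+)$ into the finite sum, which by the residue structure recalled in Section~3.1 (the residue at $z_j^+$ being a linear combination $\sum_\ell c_\ell^+\phi_{j,\ell}^+$) is again of the form $\sum_\ell c_{j,\ell}^+ e^{\i n\alpha_j^+}\phi_{j,\ell}^+$ after factoring the common $z^{n-1}=e^{\i n\alpha_j^+}$ scalar out of the residue. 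The argument for \eqref{eq:decomp_rl-} is identical with $|z|>1$, the circle $|z|=R$, and $z^{-n-1}$ in place of $z^{n-1}$.

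The main obstacle I anticipate is the bookkeeping around the residue at a multiple pole when $n\ge 2$: the factor $z^{n-1}$ is then not constant on the small circle around $z_j^+$, so ${\rm Res}(w(z,x)z^{n-1},z=z_j^+)$ is not literally $e^{\i n\alpha_j^+}$ times ${\rm Res}(w(z,x),z=z_j^+)$ unless the pole is simple. To handle this cleanly I would either (i) invoke the known fact from \cite{Zhang2019a} that the relevant poles are simple under Assumption~\ref{asp1} — which is what makes the clean exponential form $e^{\i n\alpha_j^+}\phi_{j,\ell}^+(x)$ legitimate in Theorem~\ref{th:spec_decomp} in the first place — or (ii) more carefully, observe that even at a higher-order pole the residue of $w(z,x)z^{n-1}$ still lies in the generalized eigenspace $\mathcal{N}_j^+$, so it can be written as $\sum_\ell c_{j,\ell,n}^+\phi_{j,\ell}^+(x)$ with $n$-dependent coefficients, and then either allow that $n$-dependence in the statement or note it is consistent with the theorem's own form. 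Everything else is routine: the discreteness of $\FF$ and the analyticity/meromorphy of $w(\cdot,x)$ on $\C_\times$ are quoted from Theorem~9 of \cite{Zhang2019a} and the analytic Fredholm theorem cited above, and the Banach-valued residue theorem is standard.
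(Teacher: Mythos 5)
Your proposal is correct and follows essentially the same route as the paper, which justifies Corollary~\ref{cr:decomp_rl} only by the one-line remark that it ``comes directly from the residue theorem in Banach spaces'' applied to the decomposition of Theorem~\ref{th:spec_decomp}, i.e.\ exactly the regrouping of residues inside $|z|=r$ (resp.\ outside $|z|=R$) that you describe. Your extra care about the residue at a possibly non-simple pole for $J<j\le J_r^+$ (the potential $n$-dependence of the coefficients $c_{j,\ell}^\pm$ there) is a legitimate refinement that the paper glosses over, but it does not change the approach.
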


Define the operators by:
\[
 \K_{n,r}^+\, g:=\sum_{j=1}^{J_r^+}\sum_{\ell=1}^{m_j^+}c_{j,\ell}^+ e^{\i n\alpha_j^+}\phi_{j,\ell}^+(x),\quad\text{ and }\quad \K_{-n,R}^-\, g:=\sum_{j=1}^{J_R^-}\sum_{\ell=1}^{m_j^-}c_{j,\ell}^- e^{-\i n\alpha_j^-}\phi_{j,\ell}^-(x).
\]
When $r=0$ and $R=\infty$, the two operators are defined in the following way
\[
  \K_{n,0}^+ g:=\sum_{j=1}^{\infty}\sum_{\ell=1}^{m_j^+}c_{j,\ell}^+ e^{\i n\alpha_j^+}\phi_{j,\ell}^+(x),\quad\text{ and }\quad \K_{-n,\infty}^- g:=\sum_{j=1}^{\infty}\sum_{\ell=1}^{m_j^-}c_{j,\ell}^- e^{-\i n\alpha_j^-}\phi_{j,\ell}^-(x).
\]
Then the following result comes immediately as a corollary of Theorem 18, \cite{Zhang2019a}.

\begin{lemma}
 Suppose $0<r<1<R<\infty$ are two numbers such that $\{z\in\C:\,|z|=r\text{ or }|z|=R\}\cap\FF=\emptyset$. Then
 \begin{equation}
 \label{eq:identity}
   \K_{n,0}^+=\K_{n,r}^++I_{n,r}^+ ;\quad \K_{-n,\infty}^- =\K_{-n,R}^-+I_{-n,R}^-\quad\text{ where }n\geq 1.
 \end{equation}
 For each fixed $n\geq 1$, there is a constant $C>0$ such that
 \[
  \left\|\K_{n,0}^+\, g\right\|_V\leq C\|g\|_{L^2(W_0)};\quad \left\|\K_{-n,\infty}^-\, g\right\|_V\leq C\|g\|_{L^2(W_0)}.
 \]

\end{lemma}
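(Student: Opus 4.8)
The plan is to read the identity \eqref{eq:identity} off two decompositions of one and the same function, namely $x\mapsto u(x_1+n,x_2)$ on $W_0$ where $u$ is the LAP solution of \eqref{eq:waveguideW} with right-hand side $g$, and then to obtain the norm bound by estimating, for the given radius $r$ (resp. $R$) and the fixed $n$, the two pieces into which that decomposition splits.

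First I would note that, by its very definition, $\K_{n,0}^+\,g=\sum_{j\ge1}\sum_{\ell=1}^{m_j^+}c_{j,\ell}^+e^{\i n\alpha_j^+}\phi_{j,\ell}^+$ is exactly the full right-hand side of \eqref{eq:spec_decomp+}: for $j\le J$ the coefficients coincide because $c_{j,\ell}^+=\langle g,\phi_{j,\ell}^+\rangle/(2\lambda_{j,\ell}^+)$, while for $j>J$ the points $z_j^+$ are the poles of $w(z,\cdot)$ inside the unit disc and, expanded in the basis $\{\phi_{j,\ell}^+\}$ of $\mathcal{N}_j^+$, one has ${\rm Res}\bigl(w(z,x)z^{n-1},z=z_j^+\bigr)=\sum_\ell c_{j,\ell}^+e^{\i n\alpha_j^+}\phi_{j,\ell}^+(x)$, the dependence on $n$ entering only through the prefactor $(z_j^+)^n$, so that the definition of the $c_{j,\ell}^+$ underlying Corollary \ref{cr:decomp_rl} and the definition of $\K_{n,0}^+$ are mutually consistent. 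Consequently Theorem \ref{th:spec_decomp} gives $\K_{n,0}^+\,g=u(x_1+n,x_2)$ and, in particular, the series defining $\K_{n,0}^+\,g$ converges in $V$. On the other hand Corollary \ref{cr:decomp_rl}, applied with the given $r$, gives $u(x_1+n,x_2)=\K_{n,r}^+\,g+I_{n,r}^+\,g$. Subtracting the two identities yields $\K_{n,0}^+=\K_{n,r}^++I_{n,r}^+$, and the statement for the left half-guide follows in the same way from \eqref{eq:spec_decomp-} and \eqref{eq:decomp_rl-}, or by the reflection $x_1\mapsto-x_1$ (i.e. $z\mapsto1/z$, which interchanges $\FF_+$ with $\FF_-$ and $r$ with $R$).

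For the estimate I would then bound the two summands in \eqref{eq:identity} separately, both for the fixed $r$ and $n$. The term $\K_{n,r}^+\,g$ is a finite sum: for $j\le J$ one has $|c_{j,\ell}^+|\le\|g\|_{L^2(W_0)}\|\phi_{j,\ell}^+\|_{L^2(W_0)}/(2|\lambda_{j,\ell}^+|)$, finite since $\lambda_{j,\ell}^+\ne0$ by Assumption \ref{asp1}, and for $J<j\le J_r^+$ the $c_{j,\ell}^+$ are residue coordinates of $w(z,\cdot)=e^{\i\alpha x_1}A_\alpha^{-1}F_\alpha g$ near $z_j^+$, which depend linearly and continuously on $g\in L^2(W_0)$ because $A_\alpha^{-1}$ is meromorphic and $F_\alpha$ analytic; together with $|e^{\i n\alpha_j^+}|=|z_j^+|^n\le1$ and the fixed norms $\|\phi_{j,\ell}^+\|_V$ this gives $\|\K_{n,r}^+\,g\|_V\le C\|g\|_{L^2(W_0)}$. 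For $I_{n,r}^+\,g$, the circle $\{|z|=r\}$ is compact and, by hypothesis, disjoint from $\FF$, hence $A_\alpha^{-1}$ is bounded on the corresponding $\alpha$-arc and $\sup_{|z|=r}\|w(z,\cdot)\|_V\le C\|g\|_{L^2(W_0)}$; therefore
\[
 \|I_{n,r}^+\,g\|_V=\Bigl\|\oint_{|z|=r}w(z,x)z^{n-1}\d z\Bigr\|_V\le 2\pi r^{n}\sup_{|z|=r}\|w(z,\cdot)\|_V\le C\|g\|_{L^2(W_0)}.
\]
Adding the two bounds and invoking the identity already proved gives $\|\K_{n,0}^+\,g\|_V\le C\|g\|_{L^2(W_0)}$ (and one even gets $C$ independent of $n$, since $|z_j^+|\le1$ and $r<1$); the bound for $\K_{-n,\infty}^-$ is symmetric. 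Equivalently, once $\K_{n,0}^+\,g=u(x_1+n,x_2)$ is known this bound is merely the continuity on the cell $W_n$ of the LAP solution map $g\mapsto u$.

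The steps above are essentially bookkeeping; the genuine content sits upstream, in the convergence in $V$ of the series defining $\K_{n,0}^+\,g$ — equivalently, the summability of the evanescent residue modes — and in the legitimacy of the residue-theorem step behind Corollary \ref{cr:decomp_rl} despite the accumulation of the Floquet multipliers $z_j^+$ at the origin. Both are exactly what Theorem 18 of \cite{Zhang2019a} supplies (resting in turn on the estimate of the distribution of the eigenvalues, cf. \cite{Hohag2013}). I therefore expect the only point inside the present proof that requires real care to be the term-by-term identification of ${\rm Res}\bigl(w(z,x)z^{n-1},z=z_j^+\bigr)$ with $\sum_\ell c_{j,\ell}^+e^{\i n\alpha_j^+}\phi_{j,\ell}^+(x)$ — that is, the pole-order and $z$-shift bookkeeping — which I would check against the residue formulas in \cite{Zhang2019a}.
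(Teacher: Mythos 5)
Your argument is correct and matches the paper's route: the paper offers no proof beyond the remark that the lemma ``comes immediately as a corollary of Theorem 18, \cite{Zhang2019a}'', and your reconstruction---identifying $\K_{n,0}^+\,g$ with the full spectral decomposition of $u(x_1+n,x_2)$, subtracting the $r$-level decomposition of Corollary \ref{cr:decomp_rl}, and bounding the finite sum and the contour integral separately---is exactly the bookkeeping that citation compresses. You also correctly locate the genuine content (summability of the residue series and the validity of the residue-theorem step) in the cited external theorem rather than in the present lemma.
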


In the next theorem, we will estimate the terms $I_{n,r}^+g$ and $I_{n-,R}^-g$, when $r$ and $R$ take some special values.

\begin{theorem}
 \label{th:int_r}
 For each $m\in\N$, define $r_m :=\exp\left(-\pi\sqrt{\frac{m^2+(m+1)^2}{2}}\right)$ and $R_m:=\exp\left(\pi\sqrt{\frac{m^2+(m+1)^2}{2}}\right)$, then we have the following estimations for $n\geq 1$:
 \begin{equation}
  \label{eq:int_r}
  \left\|I_{n,r_m}^+\,g\right\|_{V}\leq C e^{-\pi(n-1)m}\|g\|_{L^2(W_0)}
 \end{equation}
 and
  \begin{equation}
  \label{eq:int_R}
  \left\|I_{-n,R_m}^- \,g\right\|_{V}\leq C e^{\pi(n-1)m}\|g\|_{L^2(W_0)}.
 \end{equation}
 Here the constant $C$ does not depend on $m,\,n$ and $g$.
\end{theorem}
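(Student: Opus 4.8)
The plan is to transfer the contour integral to the $\alpha$-plane, where $w(\alpha,\cdot)=e^{\i\alpha x_1}v_\alpha$ is controlled by a resolvent estimate for the shifted elliptic operator, and then to play the decay of $z^{n-1}$ along the circle $|z|=r_m$ off against the exponential growth of the right-hand side. Only \eqref{eq:int_r} need be proved; \eqref{eq:int_R} follows verbatim after the reciprocal substitution $z\mapsto 1/z$ (equivalently $\alpha\mapsto-\alpha$), under which $|z|=R_m$ becomes $\Im\alpha=-t_m$ and the role of $R_m=r_m^{-1}$ is symmetric. \emph{Step 1 (reduction to a bound on the circle).} Estimating the $V$-valued integral under the integral sign,
\[
 \bigl\|I_{n,r_m}^+g\bigr\|_V\le\oint_{|z|=r_m}|z|^{n-1}\,\|w(z,\cdot)\|_V\,|\d z|\le 2\pi r_m^{\,n}\sup_{|z|=r_m}\|w(z,\cdot)\|_V .
\]
Since $-\log r_m=\pi\sqrt{m^2+m+1/2}\ge\pi m$, one has $r_m^{\,n-1}\le e^{-\pi(n-1)m}$ for $n\ge1$, so it suffices to establish the \emph{$m$-independent} bound $\sup_{|z|=r_m}\|w(z,\cdot)\|_V\le C\,r_m^{-1}\|g\|_{L^2(W_0)}$.

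\emph{Step 2 (the $\alpha$-picture).} Write $z=e^{\i\alpha}$ with $\alpha=\theta+\i t_m$, $\theta\in(-\pi,\pi]$, $t_m:=\pi\sqrt{m^2+m+1/2}$. By \eqref{eq:per} the periodic function $v_\alpha$ solves $\Delta v_\alpha+2\i\alpha\partial_{x_1}v_\alpha+(k^2q-\alpha^2)v_\alpha=e^{-\i\alpha x_1}g$, i.e. $v_\alpha=S_\alpha\bigl(e^{-\i\alpha x_1}g\bigr)$ with $S_\alpha$ the inverse of $\Delta+2\i\alpha\partial_{x_1}+k^2q-\alpha^2$ under the boundary conditions of $V_\p$, and $w(\alpha,x)=e^{\i\alpha x_1}v_\alpha(x)$. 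On $W_0$ we have $|e^{\i\alpha x_1}|\le1$ and $|e^{-\i\alpha x_1}|\le e^{t_m}=r_m^{-1}$, while differentiating $e^{\i\alpha x_1}$ costs a factor $|\alpha|$; hence
\[
 \|w(\alpha,\cdot)\|_V^2\le C\bigl(|\alpha|^2\|v_\alpha\|_{L^2(W_0)}^2+\|v_\alpha\|_{H^1(W_0)}^2\bigr)\le C\,r_m^{-2}\Bigl(|\alpha|^2\|S_\alpha\|_{L^2\to L^2}^2+\|S_\alpha\|_{L^2\to H^1}^2\Bigr)\|g\|_{L^2(W_0)}^2 .
\]
Because $|\alpha|\le\pi+t_m$ is of order $m$, Step 1 reduces the theorem to two $\theta$-uniform resolvent estimates, $\|S_\alpha\|_{L^2(W_0)\to L^2(W_0)}\le C/(1+m)$ and $\|S_\alpha\|_{L^2(W_0)\to H^1(W_0)}\le C$ on $\Im\alpha=t_m$.

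\emph{Step 3 (resolvent estimates on $\Im\alpha=t_m$).} Here the special value $t_m^2=\tfrac12\pi^2\bigl(m^2+(m+1)^2\bigr)$ is used: it places $t_m^2$ exactly halfway between the Dirichlet thresholds $(m\pi)^2$ and $((m+1)\pi)^2$. I would compare $S_\alpha$ with $L_\alpha^{-1}$, where $L_\alpha:=\Delta+2\i\alpha\partial_{x_1}-\alpha^2$ is diagonalised by $e^{2\pi\i jx_1}\sin(l\pi x_2)$ ($j\in\Z$, $l\in\N$) with eigenvalue $-(2\pi j+\alpha)^2-(l\pi)^2$. A direct computation shows that for $\Im\alpha=t_m$ these eigenvalues have modulus $\ge c\max(1,m)$ — for $j\ne0$ the imaginary part is $2(2\pi j+\theta)t_m$ with $|2\pi j+\theta|\ge\pi$, and for $j=0$ the gap placement forces $\bigl|(l\pi)^2+\theta^2-t_m^2\bigr|\ge\pi^2(m-\tfrac12)$ for every $l$ — and moreover $\sup_{j,l}\bigl(1+(2\pi j)^2+(l\pi)^2\bigr)/|\mathrm{eigenvalue}|^2\le C$ uniformly in $m$; this gives the two bounds with $L_\alpha$ in place of $S_\alpha$. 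Since $k^2q$ is $L^2$-bounded and $\|L_\alpha^{-1}k^2q\|_{L^2\to L^2}\le Ck^2\|q\|_\infty/(1+m)<\tfrac12$ for all large $m$, the Neumann series $S_\alpha=(I+L_\alpha^{-1}k^2q)^{-1}L_\alpha^{-1}$ transfers both bounds to $S_\alpha$; the eigenvalue-distribution analysis of \cite{Hohag2013} moreover ensures that $\FF$ does not meet $|z|=r_m$, so that $w(z,\cdot)$ is analytic on the circle. The finitely many small $m$ not reached by the Neumann series are disposed of separately, using that $|z|=r_m$ still avoids $\FF$ and that Step 1 then holds with an $m$-dependent constant, absorbed into the final $C$.

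\emph{Main obstacle.} The crux is the point already visible in Step 2: on the segments $\Im\alpha=t_m$ the norm $\|S_\alpha\|_{V_\p\to V_\p}$ only stays bounded, not $O(1/m)$, so the naive estimate $\|w\|_V\le C(1+|\alpha|)\|v_\alpha\|_{V_\p}$ would lose a spurious factor $m\sim t_m$ and destroy the $m$-uniformity of $C$. One has to notice that the offending factor $|\alpha|$ multiplies only the \emph{$L^2$}-norm of $v_\alpha$, for which the sharper bound $\|S_\alpha\|_{L^2\to L^2}=O(1/m)$ is available precisely because $t_m$ sits in the gap between consecutive Dirichlet thresholds; proving this sharp bound uniformly in $\theta\in(-\pi,\pi]$, and checking its perturbative stability under the lower-order term $k^2q$, is the technical heart of the argument.
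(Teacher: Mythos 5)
Your proposal is correct and reaches the same final step as the paper (parametrize the circle $|z|=r_m$, bound $\|w(z_m(\theta),\cdot)\|_{V}$ uniformly in $\theta$, and integrate, trading $r_m^{\,n}$ against the growth of $w$), but it obtains the key uniform bound by a genuinely different and more self-contained route. The paper simply cites Lemma 26 of \cite{Zhang2019a} for the two facts that the circle $|z|=r_m$ avoids $\FF$ and that $\|w(z_m(\theta),\cdot)\|_{L^2(W_0)}\leq Ce^{\pi m}m^{-1}\|g\|_{L^2(W_0)}$, and then upgrades this to a $V$-bound by testing the variational identity \eqref{eq:per_var} with $w$ itself and absorbing $\beta^2=O(m^2)$ against the factor $m^{-2}$. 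You instead reprove that cited lemma: diagonalizing the constant-coefficient operator $L_\alpha$ in the basis $e^{2\pi\i jx_1}\sin(l\pi x_2)$, exploiting that $t_m^2=\tfrac12\pi^2(m^2+(m+1)^2)$ sits midway between consecutive Dirichlet thresholds to get $\|L_\alpha^{-1}\|_{L^2\to L^2}=O(1/m)$ and $\|L_\alpha^{-1}\|_{L^2\to H^1}=O(1)$, and transferring these to $S_\alpha$ by a Neumann series in $k^2q$; your eigenvalue lower bounds ($|2\pi j+\theta|\geq\pi$ for $j\neq0$, and $|(l\pi)^2+\theta^2-t_m^2|\geq\pi^2(m-\tfrac12)$ for $j=0$) check out, and you correctly identify that the factor $|\alpha|\sim m$ must be paid only against the $L^2$-norm of $v_\alpha$, which is exactly where the $O(1/m)$ gain is available. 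What your route buys is a proof that explains the choice of $r_m$ and makes the invertibility on the circle automatic for large $m$; what it costs is the small-$m$ regime, where the Neumann series condition $Ck^2\|q\|_\infty/m<\tfrac12$ fails and your assertion that $|z|=r_m$ still avoids $\FF$ is not established by your argument --- there you are, like the paper, ultimately leaning on the cited lemma (or on an ad hoc verification for finitely many $m$), which you should state explicitly rather than attribute loosely to the eigenvalue counting of \cite{Hohag2013}.
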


\begin{proof}
We begin with the estimation of $I_{n,r_m}^+g$. 
From Lemma 26, \cite{Zhang2019a}, for any $\theta\in(-\pi,\pi]$, $z_m(\theta):=r_me^{\i\theta}\notin\FF$. Moreover, 
\begin{equation}
\label{eq:est_l2}
 \left\|w\left(z_m(\theta),\cdot\right)\right\|_{L^2(W_0)}\leq C e^{\pi m}m^{-1}\|g\|_{L^2(W_0)},
\end{equation}
where $C$ does not depend on $m$, $\theta$ and $g$. In \eqref{eq:per_var}, let $\alpha$ be replaced by $\alpha+\i\beta:=-{\i\log(z)}$ and   $v_\alpha=\phi=w\left(z_m(\theta),\cdot\right)$ in \eqref{eq:per_var}, take the real part, we get:
\begin{align*}
 \int_{W_0}\left[\left|\nabla w\left(z_m(\theta),x\right)\right|^2-2\i\alpha\frac{\partial w\left(z_m(\theta),x\right)}{\partial x_1}\overline{w\left(z_m(\theta),x\right)}-(k^2 q+\beta^2-\alpha^2)\left|w\left(z_m(\theta),x\right)\right|^2\right]\d x\\=-\Re\left(\int_{W_0}z^{-x_1}g(x)\overline{w\left(z_m(\theta),x\right)}\d x\right)
\end{align*}
With Young's inequality and H{\"o}lder's inequality,
\begin{align*}
\left\|\nabla w\left(z_m(\theta),\cdot\right)\right\|^2_{L^2(W_0)}&\leq \frac{1}{2}\left\|\frac{\partial w(z_m(\theta),\cdot)}{\partial x_1}\right\|^2_{L^2(W_0)}+2\alpha^2\left\|w(z_m(\theta),\cdot)\right\|^2_{L^2(W_0)}\\
&+\left\|k^2 q+\beta^2-\alpha^2\right\|_\infty\left\|w(z_m(\theta),\cdot)\right\|^2_{L^2(W_0)}+\left\|z^{- x_1}\right\|_\infty\|g\|_{L^2(W_0)}\left\|w(z_m(\theta),\cdot)\right\|_{L^2(W_0)}.
\end{align*}
Use the fact that $\left\|\frac{\partial w(z_m(\theta),\cdot)}{\partial x_1}\right\|_{L^2(W_0)}\leq \left\|\nabla w\left(z_m(\theta),\cdot\right)\right\|_{L^2(W_0)}$, since  $\alpha\in\left(-{\pi},{\pi}\right]$ and $\beta=O(m)$, 
\begin{align*}
 \left\|\nabla w\left(z_m(\theta),\cdot\right)\right\|^2_{L^2(W_0)}&\leq 2(k^2\|q\|_\infty+\beta^2+3\alpha^2)\left\|w(z_m(\theta),\cdot)\right\|^2_{L^2(W_0)}+r_m^{-1/2}\|g\|_{L^2(W_0)}\left\|w(z_m(\theta),\cdot)\right\|_{L^2(W_0)}\\
 &\leq  C(k^2\|q\|_\infty+\beta^2+3\alpha^2)e^{2\pi m}m^{-2}\|g\|_{L^2(W_0)}^2+C e^{2\pi m}m^{-1}\|g\|_{L^2(W_0)}^2\\
 &\leq C e^{\pi m}\|g\|_{L^2(W_0)}.
\end{align*}
Together with the estimation for the $L^2$-norm in \eqref{eq:est_l2}, we get the estimation:
\[
 \left\|w\left(z_m(\theta),\cdot\right)\right\|_{V}\leq C e^{\pi m}\|g\|_{L^2(W_0)}.
\]

Thus we plug this result to $I_{n,r_m}^+g$, and use  Minkowski's integral inequality (Theorem 202, \cite{Hardy1988}):
\begin{align*}
 \left\|I_{n,r_m}^+g\right\|_{V}&=\left\|\oint_{|z|=r_m}w(z,x)z^{n-1}\d z\right\|_{V}
 =\left\|\i r_m^n\int_{-\pi}^{\pi}w(z_m(\theta),x)e^{\i n\theta}\d \theta\right\|_{V}\\
 &\leq C r_m^n\int_{-\pi}^\pi \left\|w(z_m(\theta),\cdot)\right\|_{V_\p}\d\theta\leq C e^{-\pi(n-1)m}\|g\|_{L^2(W_0)}.
\end{align*}
Thus $I_{n,r_m}^+$ is a bounded linear operator from $L^2(W_0)$ to $V$. 

For $I_{-n,R_m}^-$, the proof is similar by replacing $z$ with $z^{-1}$ thus is omitted.
\end{proof}

\begin{lemma}
 \label{lm:compact}
 When $n\geq 2$, the operators $\K_{n,0}^+$ and $\K_{-n,\infty}^-$ are compact  from $L^2(W_0)$ to $V$.
\end{lemma}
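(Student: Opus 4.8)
The idea is that compactness is \emph{inherited} from a finite-rank approximation whose error has already been controlled. I will treat $\K_{n,0}^+$ in detail; the argument for $\K_{-n,\infty}^-$ is identical, with $r_m$ replaced by $R_m$.

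Fix $n\ge 2$. For every $m\in\N$ the circle $|z|=r_m$ avoids $\FF$ (this is Lemma~26 of \cite{Zhang2019a}, already used in the proof of Theorem~\ref{th:int_r}), so the splitting \eqref{eq:identity} applies with $r=r_m$:
\[
 \K_{n,0}^+ = \K_{n,r_m}^+ + I_{n,r_m}^+ .
\]
The first step is to note that $\K_{n,r_m}^+$ is a finite-rank operator. By definition its range lies in the finite-dimensional space ${\rm span}\bigl\{\phi_{j,\ell}^+:\ 1\le j\le J_{r_m}^+,\ 1\le \ell\le m_j^+\bigr\}$ (recall $J_{r_m}^+<\infty$ and each $m_j^+<\infty$); and $\K_{n,r_m}^+$ is bounded from $L^2(W_0)$ to $V$ because $\K_{n,r_m}^+=\K_{n,0}^+-I_{n,r_m}^+$ is a difference of bounded operators — boundedness of $\K_{n,0}^+$ being the lemma immediately preceding Theorem~\ref{th:int_r}, and boundedness of $I_{n,r_m}^+$ being part of Theorem~\ref{th:int_r} itself. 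A bounded operator with finite-dimensional range is compact, so each $\K_{n,r_m}^+$ is compact.

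The second step is to let $m\to\infty$. By \eqref{eq:int_r},
\[
 \bigl\|\K_{n,0}^+-\K_{n,r_m}^+\bigr\|_{L^2(W_0)\to V}=\bigl\|I_{n,r_m}^+\bigr\|_{L^2(W_0)\to V}\le C\,e^{-\pi(n-1)m},
\]
with $C$ independent of $m$. Since $n\ge 2$ forces $n-1\ge 1$, the right-hand side tends to $0$; hence $\K_{n,0}^+$ is the operator-norm limit of the compact operators $\K_{n,r_m}^+$ and is therefore compact. The same argument, using the estimate \eqref{eq:int_R} in place of \eqref{eq:int_r} and $R_m$ in place of $r_m$, gives $\bigl\|\K_{-n,\infty}^--\K_{-n,R_m}^-\bigr\|_{L^2(W_0)\to V}\to 0$, so $\K_{-n,\infty}^-$ is compact as well.

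There is nothing deep in this argument; the only point that deserves care is the boundedness (hence finite-rank compactness) of the truncation $\K_{n,r_m}^+$, which I obtain as above from the boundedness already established for $\K_{n,0}^+$ and $I_{n,r_m}^+$, rather than from any closed formula for the coefficients $c_{j,\ell}^+$ — such a formula, $c_{j,\ell}^+=\langle g,\phi_{j,\ell}^+\rangle/(2\lambda_{j,\ell})$, being available only for the propagating indices $j\le J$. It is also worth emphasising that the hypothesis $n\ge 2$ enters solely through the factor $e^{-\pi(n-1)m}$ in Theorem~\ref{th:int_r}: for $n=1$ that bound is only $O(1)$ in $m$ and this strategy yields nothing.
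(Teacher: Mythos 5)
Your proof is correct and follows essentially the same route as the paper: split $\K_{n,0}^+=\K_{n,r_m}^++I_{n,r_m}^+$ with the special radii $r_m$, observe that the truncation is finite-rank, and use the bound of Theorem \ref{th:int_r} to conclude that $\K_{n,0}^+$ is an operator-norm limit of compact operators. Your added remarks — justifying boundedness of $\K_{n,r_m}^+$ as a difference of bounded operators, and pinpointing that $n\ge 2$ is exactly what makes the factor $e^{-\pi(n-1)m}$ decay — are small but welcome refinements of the paper's argument rather than a different approach.
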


\begin{proof}
First consider  $\K_{n,0}^+=\K_{n,r_m}^++I_{n,r_m}^+$ where $r_m:=\exp\left(-\pi \sqrt{\frac{m^2+(m+1)^2}{2}}\right)$. Note that $\K_{n,r_m}^+$ is a  finite rank operator, then we only need to study the properties of $I_{n,r_m}^+$.

From Theorem \ref{th:int_r}, for a fixed $n\geq 1$, there is a constant $C>0$ such that 
$$\|I^+_{n,r_m}\,g\|_{V}\leq C e^{-(n-1)\pi m}\|g\|_{L^2(\Omega_0^+)}\rightarrow 0,\quad\text{ as }m\rightarrow\infty.$$
 Thus $\K_{n,0}^+=\K^+_{n,r_m}+I^+_{n,r_m}$ is the limit of a sequence of finite rank operators $\K^+_{n,r_m}$, it is compact. The proof of $\K_{-n,\infty}^-$ is similar thus is omitted.
\end{proof}

\begin{figure}[H]
 \centering
 \includegraphics[width=0.45\textwidth]{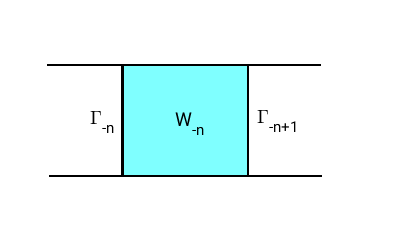}\includegraphics[width=0.45\textwidth]{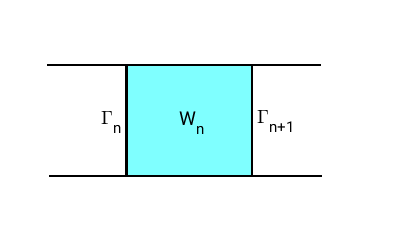}
 \label{fig:cellW}
 \caption{Periodicity cells in the periodic waveguide $W$.}
\end{figure}

For any $g\in L^2(W_0)$, $u_n=\K_{n,0}^+\,g$ and $u_{-n}=\K_{-n,\infty}^-\,g$ satisfy
\[
 \Delta u_n+k^2 q u_n =0\text{ in }W_n;\quad  \Delta u_{-n}+k^2 q u_{-n} =0\text{ in }W_{-n}
\]
with homogeneous Dirichlet boundary conditions on the upper and lower boundaries of $W_{\pm n}$. For the structures of $W_n$ and $W_{-n}$ we refer to Fig \ref{fig:cellW}. Since $u_n,\,u_{-n}\in V$, we conclude that
\[
 \mathcal{R}\left(\K_{n,0}^+\right),\,\mathcal{R}\left(\K_{-n,\infty}^-\right)\subset\left\{\phi\in V:\,\Delta \phi\in L^2(W_0)\right\}.
\]
From Theorem 5.5, \cite{Cakon2006}, $\left.\frac{\partial u_n}{\partial x_1}\right|_{\Gamma_n},\,\left.\frac{\partial u_{-n}}{\partial x_1}\right|_{\Gamma_{-n+1}}\in H^{-1/2}(\Gamma_0)$.


Define the trace operators 
\begin{align*}
 & \gamma^+_0\phi:=\left.\phi\right|_{\Gamma_n}\text{ and }\gamma^+_1\phi:=\left.\frac{\partial\phi}{\partial x_1}\right|_{\Gamma_n}\text{ for }\phi\in  \mathcal{R}\left(\K_{n,0}^+\right);\\& \gamma^-_0\phi:=\left.\phi\right|_{\Gamma_{-n+1}}\text{ and }\gamma^-_1\phi:=\left.\frac{\partial\phi}{\partial x_1}\right|_{\Gamma_{-n+1}}\text{ for }\psi\in \mathcal{R}\left(\K_{-n,\infty}^-\right)
\end{align*}
Then
\begin{align}\label{eq:der_positive_simplify}
& \left.\frac{\partial u}{\partial x_1}\right|_{\Gamma_{n}}=\gamma^+_1\left[ \K_{n,r}^+ +I_{n,r}^+ \right]g,\quad\text{ where }u\big|_{\Gamma_n}=\gamma_0^+\left[ \K_{n,r}^+ +I_{n,r}^+ \right]g;\\\label{eq:der_negative_simplify}
& \left.\frac{\partial u}{\partial x_1}\right|_{\Gamma_{-n+1}}=\gamma^-_1 \left[\K_{-n,R}^-+ I_{-n,R}^-\right] g,\quad\text{ where }u\big|_{\Gamma_{-n+1}}=\gamma_0^-\left[\K_{-n,R}^-+ I_{-n,R}^-\right] g.
\end{align}

Let $W_+:=(0,+\infty)\times(0,1)$ be a halfguide and $u_+$ be the LAP solution of 
\[
  \Delta u_++k^2q u_+=0\text{ in }W_+;\quad u_+=0\text{ on }\partial W_+\setminus\Gamma_0;\quad u_+=\phi\text{ on }\Gamma_0
\]
for a $\phi\in H^{1/2}(\Gamma_0)$, then define the DtN map $T^+$ by
\[
  T^+ \left[\phi\big|_{\Gamma_0}\right]=\left.\frac{\partial u_+}{\partial x_1}\right|_{\Gamma_0}.
\]
Similarly, let $W_-:=(-\infty,0)\times(0,1)$ and $u_-$ be the LAP solution that satisfies
\[
 \Delta u_-+k^2q u_-=0\text{ in }W_-;\quad u_-=0\text{ on }\partial W_-\setminus\Gamma_0;\quad u_-=\psi\text{ on }\Gamma_0.
\]
for a $\psi\in H^{1/2}(\Gamma_0)$, then define the DtN map $T^-$ by
\[
 T^- \left[\psi\big|_{\Gamma_{0}}\right]=-\left.\frac{\partial u_-}{\partial x_1}\right|_{\Gamma_{0}}.
\]

To make sure that $T^\pm$ are well defined, we need the following assumption.

\begin{assumption}[Assumption 5.1, \cite{Kirsc2017a}]
 \label{asp4}
 The only solution $u_\pm\in H^1(W_\pm)$ that satisfies
 \[
  \Delta u_\pm+k^2 q u_\pm=0\text{ in }W_\pm;\quad u_\pm=0\text{ on }\partial\,W_\pm
 \]
is the trivial one.
\end{assumption}

With above assumption, Theorem 5.4 in \cite{Kirsc2017a} showed that the half-guide problems are well-posed and the solutions $u_\pm\in H^1_{loc}(W_\pm)$ depend continuously on the boundary data $\phi,\psi\in H^{1/2}(\Gamma_0)$. The following lemma is a corollary of this theorem.

\begin{lemma}
 \label{lm:dtn_bdd}
 With Assumption \ref{asp1} and \ref{asp4}, the operators $T^\pm$ are bounded from $H^{1/2}(\Gamma_0)$ to $H^{-1/2}(\Gamma_0)$.
\end{lemma}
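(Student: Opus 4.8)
The plan is to derive Lemma~\ref{lm:dtn_bdd} directly from the well-posedness statement of Theorem 5.4 in \cite{Kirsc2017a} together with the interior elliptic estimates already invoked in the text. By definition, $T^+\phi = \left.\frac{\partial u_+}{\partial x_1}\right|_{\Gamma_0}$, where $u_+ \in H^1_{\loc}(W_+)$ is the LAP solution of the half-guide problem with Dirichlet data $\phi \in H^{1/2}(\Gamma_0)$. Under Assumptions~\ref{asp1} and~\ref{asp4}, that theorem provides a constant $C>0$, independent of $\phi$, such that $\|u_+\|_{H^1(W_0)} \le C\|\phi\|_{H^{1/2}(\Gamma_0)}$ (the local $H^1$-norm over the first cell being the relevant one, since the trace we care about lives on $\Gamma_0 \subset \partial W_0$). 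So the first step is simply to quote this continuous-dependence bound.

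Next I would upgrade this $H^1$ control to control of the normal derivative on $\Gamma_0$. Since $q \in L^\infty(W_+)$, the equation $\Delta u_+ = -k^2 q u_+$ gives $\Delta u_+ \in L^2(W_0)$ with $\|\Delta u_+\|_{L^2(W_0)} \le k^2\|q\|_\infty \|u_+\|_{L^2(W_0)} \le C\|\phi\|_{H^{1/2}(\Gamma_0)}$. Thus $u_+ \in \{\phi \in V : \Delta \phi \in L^2(W_0)\}$, exactly the space considered earlier in the section, and by the normal-trace theorem for this space (Theorem 5.5 in \cite{Cakon2006}, already cited above for precisely this purpose) the map $v \mapsto \left.\frac{\partial v}{\partial x_1}\right|_{\Gamma_0}$ is bounded from $\{v \in H^1(W_0) : \Delta v \in L^2(W_0)\}$, equipped with the graph norm $\|v\|_{H^1(W_0)} + \|\Delta v\|_{L^2(W_0)}$, into $H^{-1/2}(\Gamma_0)$. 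Chaining these bounds yields $\|T^+\phi\|_{H^{-1/2}(\Gamma_0)} \le C\big(\|u_+\|_{H^1(W_0)} + \|\Delta u_+\|_{L^2(W_0)}\big) \le C\|\phi\|_{H^{1/2}(\Gamma_0)}$, which is the claim for $T^+$.

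Finally, the argument for $T^-$ is identical after reflecting $x_1 \mapsto -x_1$, which interchanges $W_+$ and $W_-$ and turns $\Gamma_0$ into the corresponding trace; the sign convention $T^-\psi = -\left.\frac{\partial u_-}{\partial x_1}\right|_{\Gamma_0}$ does not affect boundedness. So I would just say "the estimate for $T^-$ follows by the reflection $x_1 \mapsto -x_1$."

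I do not expect a genuine obstacle here: the lemma is explicitly billed as a corollary of a cited well-posedness theorem, and the only technical ingredient beyond it — that a normal trace on $\Gamma_0$ is well-defined and bounded in $H^{-1/2}$ once $\Delta u_+ \in L^2$ — has already been established in the text two paragraphs above via \cite{Cakon2006}. The one point requiring a little care is making sure the constant from Theorem 5.4 in \cite{Kirsc2017a} controls the $H^1$-norm on the \emph{bounded} cell $W_0$ rather than merely giving qualitative $H^1_{\loc}$-membership; if the cited theorem states only $H^1_{\loc}$-continuous-dependence, one invokes it on the fixed compact set $\overline{W_0}$ to extract the needed quantitative bound.
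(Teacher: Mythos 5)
Your proposal is correct and follows essentially the same route as the paper, which states no proof beyond declaring the lemma a corollary of Theorem 5.4 in \cite{Kirsc2017a}; you simply make explicit the two ingredients the text already has on hand, namely the continuous dependence of $u_\pm$ on the Dirichlet data and the $H^{-1/2}$ normal-trace bound from \cite{Cakon2006} for functions with $\Delta u\in L^2(W_0)$.
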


From the definitions of the operators $T^\pm$, we rewrite the boundary condition for LAP solutions in $W$. For an LAP solution $u$ of \eqref{eq:waveguideW},
\begin{equation}
\label{eq:bc}
 \left.\frac{\partial u}{\partial x_1}\right|_{\Gamma_n}=T^+\gamma_0^+\left[\K_{n,r}^++I_{n,r}^+\right]g;\quad \left.\frac{\partial u}{\partial x_1}\right|_{\Gamma_{-n+1}}=T^-\gamma_0^-\left[\K_{-n,R}^-+I_{-n,R}^-\right]g,
\end{equation}
where
\[
 \left. u\right|_{\Gamma_n}=\gamma_0^+\left[\K_{n,r}^++I_{n,r}^+\right]g;\quad \left. u\right|_{\Gamma_{-n+1}}=\gamma_0^-\left[\K_{-n,R}^-+I_{-n,R}^-\right]g.
\]


\subsection{Estimation of number of generalized eigenfunctions}

From \eqref{eq:bc}, \eqref{eq:identity} and the definitions of $\K_{n,0}^+$ and $\K_{-n,\infty}^-$, the DtN maps $T^\pm$ are defined by infinite number of generalized eigenfunctions. In this subsection, we would like to estimate the dimension of the eigenspace related to the eigenvalues lying in the rectangle 
\begin{equation}
\label{eq:rect}
 D_m^+:=[-\pi,\pi]+\i[0,
\pi m)\text{ or }D_m^-:=[-\pi,\pi]+\i(-\pi m,0],
\end{equation}
when $m$ is a positive integer.  The estimation is based on Proposition 4.2 in \cite{Hohag2013}.

Recall that the operator $A_\alpha$, which is defined by the left hand side in \eqref{eq:per_var},  it depends on the refractive index $q$. So we rewrite it as $A_q(\alpha)$. 
Then $A_0(\alpha)$ is related to the case that $q=0$, which comes directly from the Laplacian operator. All these $\alpha\in \C$ such that $A_0(\alpha)$ is not invertible are given explicitly by:
\[
 \alpha_{j,\ell}=2\pi j\pm\i\pi \ell;\quad \phi=\exp\left(\i\left[2\pi j\pm \i\pi \ell\right]x_1\right)\sin(\pi \ell x_2),\quad j\in\Z,\,\ell=1,2,\dots,\infty.
\] Since we are only interested in the values in $(-\pi,\pi]+\i\R$, let's focus on the values with $j=0$. Moreover, we only focus on the domain that lies above the real axis since the domain below is symmetric. So we define the reference points as
\[
 \alpha_n^r=\i\pi n,\quad n=1,2,\dots,\infty.
\] 
For simplicity, let the dimension of the eigenspace of $A_q(\alpha)$ at the point $\alpha_0$ be denoted by $\mathfrak{N}(A_q(\alpha);\alpha_0)$. Suppose $\Gamma$ is a closed curve that encircles the points $\alpha_1,\dots,\alpha_p$, then let $\mathfrak{N}(A_q(\alpha);\Gamma)=\sum_{j=1}^q\mathfrak{N}(A_q(\alpha);\alpha_q)$.

Before the introduction of the result in Proposition 4.2 in \cite{Hohag2013}, we define the open disc 
$$
\D_n:=\left\{z\in\C:\,\left|z-\alpha_n^r\right|<\frac{2k^2\|q\|_\infty}{\pi(2n-1)}\right\}
$$
and the closed rectangle
$$
\mathcal{N}_n:=[-\pi,\pi]+\i\left[\left(n-\frac{1}{2}\right)\pi,\left(n+\frac{1}{2}\right)\pi\right].
$$

\begin{theorem}[Proposition 4.2 in \cite{Hohag2013}]
\label{th:est_eigenvalue}
 Let $N_0$ be a positive integer which is defined by
 \[
  N_0\geq\min\left\{n\in\N:\,\frac{\pi(2n-1)}{2}\geq\frac{k^2\|q\|_\infty}{\pi}\right\},
 \]
 take $n\geq N_0$. Then
 \[
  \mathfrak{N}(A_q(\alpha);\partial\D_n)=\mathfrak{N}(A_q(\alpha);\mathcal{N}_n)=1.
 \]

\end{theorem}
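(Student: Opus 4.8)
The plan is a perturbation argument comparing $A_q(\alpha)$ with the explicitly solvable case $q=0$, via a rank‑one (Lyapunov--Schmidt) reduction and a scalar Rouché theorem. Write $A_q(\alpha)=B(\alpha)-k^2M_q$, where $B(\alpha)$ is the operator attached to the form $\int_{W_0}\bigl[\nabla v\cdot\nabla\overline{\phi}-2\i\alpha\,\frac{\partial v}{\partial x_1}\,\overline{\phi}+\alpha^2 v\overline{\phi}\bigr]$ and $M_q$ is multiplication by $q$; thus $A_0(\alpha)=B(\alpha)$. In the $L^2(W_0)$-basis $v_{j,\ell}(x)=e^{2\pi\i j x_1}\sin(\pi\ell x_2)$ the operator $B(\alpha)$ is diagonal with entries $\mu_{j,\ell}(\alpha)=(2\pi j+\alpha)^2+(\pi\ell)^2$, which vanish exactly at $\alpha=-2\pi j\pm\i\pi\ell$. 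In the strip $\Re\alpha\in[-\pi,\pi]$ above the real axis the only such point in $\mathcal{N}_n$ (and, since for $n\ge N_0$ the radius $\rho_n:=2k^2\|q\|_\infty/(\pi(2n-1))\le\pi$ forces $\D_n\subseteq\mathcal{N}_n$, the only one enclosed by $\partial\D_n$) is $\alpha_n^r=\i\pi n$, coming from $j=0,\ell=n$; the zero of $\mu_{0,n}(\alpha)=(\alpha-\i\pi n)(\alpha+\i\pi n)$ there is simple and $\ker A_0(\i\pi n)=\mathrm{span}\{\sin(\pi n x_2)\}$. Hence $\mathfrak{N}(A_0(\alpha);\partial\D_n)=\mathfrak{N}(A_0(\alpha);\mathcal{N}_n)=1$.

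For general $q$ I would reduce everything to $L^2(W_0)$ through the meromorphic factorization $A_q(\alpha)=B(\alpha)\bigl(I-K(\alpha)\bigr)$ with the compact operator $K(\alpha)=k^2B(\alpha)^{-1}M_q$ ($B(\alpha)^{-1}$ the smoothing $L^2$-resolvent), and isolate the resonant mode: with $P_n$ the $L^2$-orthogonal projection onto $\mathrm{span}\{v_{0,n}\}$ one gets $K(\alpha)=\mu_{0,n}(\alpha)^{-1}R+K'(\alpha)$, where $R=k^2P_nM_q$ has rank one and $K'(\alpha)=k^2B(\alpha)^{-1}(I-P_n)M_q$ is analytic on $\overline{\D_n}$. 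The crucial quantitative input is $\|K'(\alpha)\|_{L^2\to L^2}\le k^2\|q\|_\infty/\min_{(j,\ell)\ne(0,n)}|\mu_{j,\ell}(\alpha)|$ together with the spectral‑gap estimate $\min_{(j,\ell)\ne(0,n)}|\mu_{j,\ell}(\alpha)|\gtrsim\pi^2(2n-1)$ on $\overline{\D_n}$; for $n\ge N_0$ this gives $\|K'(\alpha)\|_{L^2\to L^2}<\tfrac12$, so $I-K'(\alpha)$ is invertible on $\overline{\D_n}$ and $I-K(\alpha)=(I-K'(\alpha))\bigl(I-\mu_{0,n}(\alpha)^{-1}(I-K'(\alpha))^{-1}R\bigr)$. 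The last factor is the identity plus a rank‑one meromorphic term, so the generalized argument principle for analytic Fredholm families (the simple pole of $I-K$ at $\i\pi n$ cancelling the simple zero of $B$ there) reduces the count to the number of zeros in $\D_n$ of a scalar analytic function $h_n(\alpha)=\mu_{0,n}(\alpha)-G_n(\alpha)$, where $|G_n(\alpha)|\le k^2\|q\|_\infty/(1-\|K'(\alpha)\|)$ on $\overline{\D_n}$. On $\partial\D_n$ one has $|\mu_{0,n}(\alpha)|=\rho_n|\alpha+\i\pi n|\ge\rho_n\,\pi(2n-1)=2k^2\|q\|_\infty$, which dominates $|G_n(\alpha)|$ for $n\ge N_0$, so by Rouché $h_n$ has as many zeros in $\D_n$ as $\mu_{0,n}$, i.e.\ exactly one, and $\mathfrak{N}(A_q(\alpha);\partial\D_n)=1$. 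For the rectangle, on $\mathcal{N}_n\setminus\D_n$ (including $\partial\mathcal{N}_n$) every $|\mu_{j,\ell}(\alpha)|$ exceeds $k^2\|q\|_\infty$, so $\|K(\alpha)\|_{L^2\to L^2}<1$ and $A_q(\alpha)=B(\alpha)(I-K(\alpha))$ is invertible there; thus all characteristic values in $\mathcal{N}_n$ lie in $\D_n$ and $\mathfrak{N}(A_q(\alpha);\mathcal{N}_n)=\mathfrak{N}(A_q(\alpha);\partial\D_n)=1$.

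The step I expect to be the main obstacle is the interplay between the spectral gap and the size of the perturbation. It is essential that the estimates are carried out in $L^2(W_0)$, where multiplication by $k^2q$ has norm at most $k^2\|q\|_\infty$, rather than in the space $V_\p$ on which $A_\alpha$ naturally acts, where $\|A_0(\alpha)^{-1}\|$ grows with $n$ and does not detect the gap. One then has to track constants precisely enough that the gap $\pi^2(2n-1)$ beats a fixed multiple of $k^2\|q\|_\infty$ exactly once $n\ge N_0$; this is what the definitions of $N_0$ and of the radius of $\D_n$ are calibrated for, and it is where the explicit eigenvalue estimates of Proposition 4.2 in \cite{Hohag2013} are used. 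The remaining ingredients — the $q=0$ computation, the rank‑one reduction, and the scalar Rouché argument — are routine once this estimate is in hand.
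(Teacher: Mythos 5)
The paper does not actually prove this statement---it is quoted as Proposition~4.2 of \cite{Hohag2013} and used as a black box---so there is no internal proof to compare against. Your architecture (diagonalize $A_0(\alpha)$ in the basis $e^{2\pi\i jx_1}\sin(\pi\ell x_2)$, factor $A_q(\alpha)=B(\alpha)(I-K(\alpha))$ with $K(\alpha)=k^2B(\alpha)^{-1}M_q$ acting on $L^2(W_0)$, split off the resonant mode $(0,n)$ by a rank-one reduction, and finish with a scalar Rouch\'e count) is the standard Gohberg--Sigal-type perturbation argument and is almost certainly the route taken in the reference; the computation $\mu_{j,\ell}(\alpha)=(2\pi j+\alpha)^2+\pi^2\ell^2$ and the identification of $\i\pi n$ as the unique characteristic value of $B$ in $\mathcal{N}_n$ are correct, as is the algebraic identity $I-K=(I-K')(I-\mu_{0,n}^{-1}(I-K')^{-1}R)$.

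The genuine gap is exactly the quantitative step you flag as the crux but do not carry out, and with the constants as literally stated it does not close. The hypothesis $n\ge N_0$ only yields $\rho_n=2k^2\|q\|_\infty/(\pi(2n-1))\le\pi$, \emph{not} $\rho_n\le\pi/2$; since the neighbouring characteristic values $\i\pi(n\pm1)$ sit at distance exactly $\pi$ from the centre of $\D_n$, your spectral-gap claim $\min_{(j,\ell)\ne(0,n)}|\mu_{j,\ell}(\alpha)|\gtrsim\pi^2(2n-1)$ on $\overline{\D_n}$ fails when $\rho_n$ is close to $\pi$: on $\partial\D_n$ one only gets $|\mu_{0,n+1}(\alpha)|\ge(\pi-\rho_n)\cdot O(n)$, which can be arbitrarily small relative to $k^2\|q\|_\infty$, so $\|K'(\alpha)\|<\tfrac12$ is not available and the factorization you rely on breaks down. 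For the same reason $\D_n\subseteq\mathcal{N}_n$ (vertical half-width $\pi/2$) is not guaranteed. Even where the gap survives, the final comparison is non-strict: your bounds give $|\mu_{0,n}(\alpha)|\ge\rho_n(2\pi n-\rho_n)\ge2k^2\|q\|_\infty$ and $|G_n(\alpha)|\le2k^2\|q\|_\infty$ on $\partial\D_n$, and Rouch\'e requires strict inequality, so equality is possible precisely at the threshold value of $N_0$ permitted by the statement. (By contrast, your worry about the rectangle is the more repairable part: for $\alpha\in\mathcal{N}_n\setminus\D_n$ near the bottom edge one has $|\alpha-\i\pi(n-1)|\ge|\alpha-\i\pi n|\ge\rho_n$, which rescues the invertibility of $I-K$ there for $n\ge2$.) None of this is a conceptual obstruction---everything closes if $N_0$ is enlarged by a fixed factor, or if the norm of $K(\alpha)$ is estimated more sharply than by $k^2\|q\|_\infty/\min_{j,\ell}|\mu_{j,\ell}(\alpha)|$, which is presumably what \cite{Hohag2013} does---but as written the decisive inequalities are asserted rather than proved, and in the extreme case allowed by the stated definition of $N_0$ they are false.
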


From Theorem \ref{th:est_eigenvalue}, we  estimate $\mathfrak{N}(A_q(\alpha);\partial D_m^+)$. Let the number of points in $\FF$ lying in the domain $D_{N_0-1/2}^+$ be $M_0$, then we can estimate the number  for a general $m>0$. Then
\[
  \mathfrak{N}(A_q(\alpha);\partial D_m^+)\leq\begin{cases}
 M_0,\quad\text{ when }     m\leq N_0-1/2;\\
 M_0+\ceil*{m-N_0+\frac{1}{2}},\quad\text{ when }m> N_0-1/2;
                                              \end{cases}
\]
 where $\ceil*{s}$ is the largest integer which is no larger than $s$. So from above arguments, it is concluded that
 \[
  \mathfrak{N}(A_q(\alpha);\partial D_m^+)=O(m).
 \]
 Thus the dimension of the eigenspace related to eigenvalues lying in $D_m^+$ is $O(m)$. The result is the same for $D_m^-$.

\section{Variational formulation}

In this section, we introduce a variational formulation for the LAP solution $u$ to the equation \eqref{eq:waveguide} in the open bounded domain $D$ such that:
\[
 \overline{D}=\overline{\Omega_1^-\cup\Omega_0^-\cup\Omega_0\cup\Omega_0^+\cup\Omega_1^+}.
\]
Since we have to use the results  for periodic  waveguides, we first extend the LAP solution $u$ restricted in $\Omega_+$ ($\Omega_-$) to the full waveguide $\widetilde{\Omega}_+$ ($\widetilde{\Omega}_-$).  Let $\mathcal{X}_+$ and $\mathcal{X}_-$ be two smooth functions which satisfy
\[
 \mathcal{X}_+(x_1)=\begin{cases}
                   1,\quad x_1\geq R+L_+;\\
                   0,\quad x_1\leq R;\\
                   \text{smooth},\quad\text{otherwise};
                  \end{cases}\quad \mathcal{X}_-(x_1)=\begin{cases}
                   1,\quad x_1\leq -R-L_-;\\
                   0,\quad x_1\geq -R;\\
                   \text{smooth},\quad\text{otherwise}.
                  \end{cases}
\]
Let $\widetilde{u}_+(x):=\mathcal{X}_+(x_1)u(x)$ and extend it by $0$ into $\widetilde{\Omega}_+$. Then it is a solution of
\begin{equation}
 \label{eq:half_omega+}
 \Delta\widetilde{u}_++k^2 q_+ \widetilde{u}_+=\M^+ u\left(:=2\mathcal{X}'_+(x_1)\frac{\partial u(x)}{\partial x_1}+\mathcal{X}''_+(x_1)u(x)\right)\text{ in }\widetilde{\Omega}_+;\quad \widetilde{u}_+=0\text{ on }\partial \widetilde{\Omega}_+.
\end{equation}
Similarly, let $\widetilde{u}_-(x):=\mathcal{X}_-(x_1)u(x)$ and extend it by $0$ into $\widetilde{\Omega}_-$, then it satisfies
\begin{equation}
 \label{eq:half_omega-}
 \Delta\widetilde{u}_-+k^2 q_- \widetilde{u}_-=\M^- u\left(:=2\mathcal{X}'_-(x_1)\frac{\partial u(x)}{\partial x_1}+\mathcal{X}''_-(x_1)u(x)\right)\text{ in }\widetilde{\Omega}_-;\quad \widetilde{u}_-=0\text{ on }\partial \widetilde{\Omega}_-;
\end{equation}
The operator $\M^+$ ($\M^-$) is bounded from $\widetilde{H}^1(\Omega_0^+)$ ($\widetilde{H}^1(\Omega_0^-)$) to $L^2(\Omega_0^+)$ ($L^2(\Omega_0^-)$), and there is a constant $C>0$ such that
\[\left\|\M^+ u\right\|_{L^2(\Omega_0^+)}\leq C\|u\|_{\widetilde{H}^1(\Omega_0^+)},\quad
\left\|\M^- u\right\|_{L^2(\Omega_0^-)}\leq C\|u\|_{\widetilde{H}^1(\Omega_0^-)}.
\]

From Corollary \ref{cr:decomp_rl}, the solutions $\widetilde{u}_+$ and $\widetilde{u}_-$ are the unique LAP solutions of \eqref{eq:half_omega+} and \eqref{eq:half_omega-} in $\widetilde{\Omega}_+$ and $\widetilde{\Omega}_-$ with source terms $\M^+ u$ and $\M^-u$, respectively. Moreover, from their definitions,
\[
\left. \widetilde{u}_+\right|_{\Omega_+\setminus\overline{\Omega_0^+}}=\left.u\right|_{\Omega_+\setminus\overline{\Omega_0^+}};\quad \left. \widetilde{u}_-\right|_{\Omega_-\setminus\overline{\Omega_0^-}}=\left.u\right|_{\Omega_-\setminus\overline{\Omega_0^-}}.
\]
From \eqref{eq:bc}, 
\begin{align*}
 &\left.\frac{\partial \widetilde{u}_+}{\partial x_1}\right|_{\Gamma_2^+}=T^+\gamma^+_0 \left[\K_{r}^+ + I^+_r \right]\M^+ u,\quad\text{ where } \left.\widetilde{u}_+\right|_{\Gamma_2^+} =\gamma_0^+\left[\K_{r}^+ + I^+_r \right]\M^+ u;\\
 & \left.\frac{\partial \widetilde{u}_-}{\partial x_1}\right|_{\Gamma_{-1}^-}=T^-\gamma^-_0 \left[\K_{R}^- + I_{R}^- \right]\M^- u,\quad\text{ where } \left.\widetilde{u}_-\right|_{\Gamma_{-1}^-}=\gamma^-_0 \left[\K_{R}^- + I_{R}^- \right]\M^- u;
\end{align*}
where $\K_r^+=\K_{2,r}^+$, $I^+_{2,r}$, $\K_R^-=\K_{-2,R}^-$ and $I_R^-=I^-_{-2,R}$.

With these boundary conditions, the problem can be formulated in the bounded domain $D$. Let
\[
 V_D:=\left\{\phi\in H^1(D):\,\phi=0\text{ on }\partial D\cap \partial\Omega\right\}.
\]
Then we look for $u\in V_D$ such that
\begin{equation}
 \label{eq:var_D}
 \int_D\left[\nabla u\cdot\nabla\overline{\phi}-k^2  n u\overline{\phi}\right]\d x-\int_{\Gamma_2^+}\frac{\partial u}{\partial x_1}\overline{\phi}\d s+\int_{\Gamma_{-1}^-}\frac{\partial u}{\partial x_1}\overline{\phi}\d s=-\int_D f\overline{\phi}\d x
\end{equation}
holds for all $\phi\in V_D$. From Riesz representation theorem, we can define the following operators in $V_D$ by:
\begin{align*}
  \left<A u,\phi\right>_{V_D}=\int_D\left[\nabla u\cdot\nabla\overline{\phi}-k^2  n u\overline{\phi}\right]\d x;\quad
 \left<D^+ u,\phi\right>_{V_D}=\int_{\Gamma_2^+}\frac{\partial u}{\partial x_1}\overline{\phi}\d s;\quad
 \left<D^- u,\phi\right>_{V_D}=-\int_{\Gamma_{-1}^-}\frac{\partial u}{\partial x_1}\overline{\phi}\d s.
\end{align*}
Then \eqref{eq:var_D} is written as the equivalent form:
\begin{equation}
 \label{eq:var_exa}
 (A-D^+-D^-)u=F,
\end{equation}
where $F\in V_D$ such that $\left<F,\phi\right>_{V_D}=-\int_D f\overline{\phi}\d x$.
Since the operator $A$ is Fredholm, we still need to study the  operators $D^\pm$.

Let $\gamma^+:\,\phi\mapsto\phi\big|_{\Gamma_2^+}$ and $\gamma^-:\,\phi\mapsto\phi\big|_{\Gamma_{-1}^-}$ be the trace operators from $V_D$ to $H^{1/2}(\Gamma_2)$ and $H^{1/2}(\Gamma_{-2})$. Then from a duality argument,
\[
 D^+=\left(\gamma^+\right)^*T^+\gamma_0^+\left[\K_r^++I_r^+\right]\M^+\text{ and } D^-=\left(\gamma^-\right)^*T^-\gamma_0^-\left[\K_R^-+I_R^-\right]\M^-.
\]
From Lemma \ref{lm:dtn_bdd}, $T^\pm$ are bounded. From the boundedness of $\gamma^\pm$, $\gamma_0^\pm$ and $\M^\pm$, with the results in Lemma \ref{lm:compact}, $D^\pm$ are compact operators.  Then $A-D^+-D^-$ is a Fredholm operator. Thus it is invertible if and only if it is uniquely solvable. To guarantee that the original problem \eqref{eq:waveguide} has a unique solution, we make the following assumption.

\begin{assumption}
 \label{asp2}
 For given $k>0$ and $n\in L^\infty(\Omega_0)$, there is no eigenfunctions of \eqref{eq:waveguide}.
\end{assumption}

With Assumption \ref{asp2}, the following well-posedness result is obvious from the Fredholm alternative.

\begin{theorem}
 \label{th:well_posedness}Suppose Assumptions \ref{asp1}, \ref{asp2} and \ref{asp4} hold.
 The problem \eqref{eq:var_exa} has a unique solution which satisfies the radiation condition defined in Theorem \ref{th:spec_decomp}.
\end{theorem}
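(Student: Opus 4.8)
The plan is to read the statement off the Fredholm alternative for the operator $A-D^+-D^-$ of \eqref{eq:var_exa}. As recalled just above, $A$ is Fredholm and $D^\pm$ are compact (using Lemmas \ref{lm:dtn_bdd} and \ref{lm:compact}); since $A$ is in fact a compact perturbation of the Riesz isomorphism of $V_D$ (the term $-k^2 n u$ is compact by Rellich's theorem), the operator $A-D^+-D^-$ is Fredholm of index zero. Hence \eqref{eq:var_exa} is uniquely solvable for every $F\in V_D$ exactly when its kernel is trivial, so the whole argument reduces to showing that $(A-D^+-D^-)u=0$ forces $u=0$. The solution produced this way will automatically obey the radiation condition, because the operators $D^\pm$ were assembled from the spectral decompositions of Corollary \ref{cr:decomp_rl}, equivalently Theorem \ref{th:spec_decomp}.

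To see that the kernel is trivial, take $u\in V_D$ with $(A-D^+-D^-)u=0$. Testing the homogeneous form of \eqref{eq:var_D} against $\phi\in V_D$ supported away from $\Gamma_2^+\cup\Gamma_{-1}^-$ gives $\Delta u+k^2 q u=0$ in $D$, and keeping general $\phi$ then gives the co-normal identities $\partial_{x_1}u|_{\Gamma_2^+}=T^+\gamma_0^+[\K_r^++I_r^+]\M^+u$ and $\partial_{x_1}u|_{\Gamma_{-1}^-}=T^-\gamma_0^-[\K_R^-+I_R^-]\M^-u$, where $\M^\pm u$ depends only on $u$ in the cut-off cells $\Omega_0^\pm\subset D$. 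Let $\widetilde u_\pm$ be the LAP solutions of \eqref{eq:half_omega+}–\eqref{eq:half_omega-} in $\widetilde\Omega_\pm$ with source terms $\M^\pm u$. By Corollary \ref{cr:decomp_rl}, the identity \eqref{eq:identity} and the definition of $T^\pm$, the functions $\widetilde u_\pm$ carry on $\Gamma_2^+$, resp.\ $\Gamma_{-1}^-$, exactly the Dirichlet data $\gamma_0^+[\K_r^++I_r^+]\M^+u$, resp.\ $\gamma_0^-[\K_R^-+I_R^-]\M^-u$, and exactly the above co-normal data. Granting that the traces of $u$ on $\Gamma_2^+$, $\Gamma_{-1}^-$ coincide with those Dirichlet data, the function $\widetilde u$ obtained by gluing $u$ on $D$ to $\widetilde u_+$ beyond $\Gamma_2^+$ and to $\widetilde u_-$ beyond $\Gamma_{-1}^-$ lies in $\widetilde H^1_{loc}(\Omega)$ and solves $\Delta\widetilde u+k^2 q\widetilde u=0$ in $\Omega$, the jumps of the normal derivative across the two interfaces vanishing by the co-normal matching. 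Being built from half-guide LAP solutions, $\widetilde u$ satisfies the radiation condition of Theorem \ref{th:spec_decomp}, so Assumption \ref{asp2} forces $\widetilde u\equiv0$ and hence $u\equiv0$. Once the kernel is trivial, the Fredholm alternative gives a unique solution of \eqref{eq:var_exa}; running the same gluing with the genuine right-hand side exhibits it as the restriction of an LAP solution of \eqref{eq:waveguide}, which in particular satisfies the radiation condition — the second assertion.

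I expect the delicate point to be exactly the step flagged above: that an arbitrary kernel element $u$ has, on $\Gamma_2^+$ and $\Gamma_{-1}^-$, the same Dirichlet traces as the half-guide solutions $\widetilde u_\pm$ generated by $\M^\pm u$ — i.e., that the truncated problem \eqref{eq:var_exa} is genuinely equivalent to \eqref{eq:waveguide}, not merely a consequence of it. I would obtain this from the explicit form of $\K_r^\pm$, $I_r^\pm$, $\M^\pm$ and $T^\pm$ (the latter already forcing the co-normal traces to agree), supplemented if needed by a compatibility/unique-continuation argument in the outermost cells $\Omega_1^\pm$, where $u$ and the corresponding $\widetilde u_\pm$ satisfy the same homogeneous Helmholtz equation with the same lateral data and the same co-normal data on $\Gamma_2^+$, resp.\ $\Gamma_{-1}^-$. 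The rest — the index-zero Fredholm bookkeeping and the existence half — is then routine.
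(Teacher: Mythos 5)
Your argument is essentially the paper's: the paper disposes of this theorem in one line (``obvious from the Fredholm alternative'', $A-D^+-D^-$ being Fredholm of index zero because $D^\pm$ are compact by Lemma \ref{lm:compact} and the boundedness of $T^\pm$, $\gamma_0^\pm$, $\M^\pm$), and delegates the injectivity step --- extending a kernel element of \eqref{eq:var_exa} to an LAP solution of \eqref{eq:waveguide} by the cut-off/gluing construction and then invoking Assumption \ref{asp2} --- to the proof of Theorem \ref{th:equivalent}, exactly as you do. The delicate point you flag (that the Dirichlet trace of $u$ on $\Gamma_2^+$ agrees with $\gamma_0^+\left[\K_r^++I_r^+\right]\M^+u$, so that the glued function and not just its co-normal derivative is continuous across the interface) is asserted without justification in the paper as well, so your write-up is, if anything, more explicit about where the real work lies.
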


\begin{theorem}
 \label{th:equivalent}With Assumptions \ref{asp1}, \ref{asp2} and \ref{asp4}, the unique solution of \eqref{eq:var_exa} in $V_D$ is equivalent to the LAP solution of \eqref{eq:waveguide}.
\end{theorem}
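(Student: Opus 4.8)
The plan is to prove the equivalence as two matching statements: (i) the restriction $u|_D$ of the LAP solution $u$ of \eqref{eq:waveguide} belongs to $V_D$ and solves the truncated problem \eqref{eq:var_exa}; and (ii) conversely, the unique solution $u_D$ of \eqref{eq:var_exa}, extended into the half guides by the half-guide LAP solutions $\widetilde u_\pm$ with sources $\M^\pm u_D$, reproduces $u$ on all of $\Omega$. Existence of the LAP solution $u\in\widetilde{H}^2_{loc}(\Omega)$ is taken for granted (this is exactly what Assumptions \ref{asp1}, \ref{asp2}, \ref{asp4} guarantee and what is being approximated). Then (i) together with the uniqueness part of Theorem \ref{th:well_posedness} already forces $u_D=u|_D$, and (ii) is a short afterthought, so the real work is all in (i).

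For (i) I would simply unwind the construction of Section 4 with $u$ playing the role of ``the solution''. Since $\M^\pm u$ is supported in $\Omega_0^\pm\subset D$, we have $\M^\pm u=\M^\pm(u|_D)$, so $u|_D$ determines both right-hand sides. The cut-offs $\widetilde u_\pm:=\mathcal{X}_\pm u$ (extended by zero) are, by uniqueness of the half-guide LAP problems under Assumption \ref{asp4}, precisely the LAP solutions of \eqref{eq:half_omega+} and \eqref{eq:half_omega-} with sources $\M^\pm u$; applying Corollary \ref{cr:decomp_rl} in $\widetilde\Omega_\pm$ together with the definition of the half-guide DtN maps $T^\pm$ shows that the normal derivatives of $u$ on the artificial cuts $\Gamma_2^+$ and $\Gamma_{-1}^-$ equal $T^+\gamma_0^+[\K_r^++I_r^+]\M^+u$ and $T^-\gamma_0^-[\K_R^-+I_R^-]\M^-u$. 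I would then multiply $\Delta u+k^2qu=f$ by $\overline\phi$ for $\phi\in V_D$, integrate over $D$, and use Green's identity (legitimate because $u\in\widetilde{H}^2_{loc}$, so $\Delta u\in L^2(D)$): the boundary terms on $\partial D\cap\partial\Omega$ vanish, and the two remaining boundary integrals are exactly $\langle D^+u,\phi\rangle_{V_D}$ and $\langle D^-u,\phi\rangle_{V_D}$. This is \eqref{eq:var_D}, i.e. $u|_D$ solves \eqref{eq:var_exa}; the uniqueness statement of Theorem \ref{th:well_posedness} then gives $u_D=u|_D$.

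For (ii) I would observe that $\M^\pm u_D=\M^\pm u$, so by uniqueness of the half-guide LAP problems the extensions entering the definitions of $D^\pm$, evaluated at $u_D$, coincide with $\mathcal{X}_\pm u$; since $\mathcal{X}_\pm\equiv1$ on $\Omega_\pm\setminus D$, setting $\widehat u:=u_D$ on $D$ and $\widehat u:=\widetilde u_\pm$ on $\Omega_\pm\setminus D$ yields $\widehat u=u$ on all of $\Omega$. This is the asserted equivalence, and in particular it shows that the extension of the truncated solution satisfies the radiation condition of Theorem \ref{th:spec_decomp}.

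The delicate step is the bookkeeping inside (i): one must check that the normal traces of $u$ on $\Gamma_2^+$ and $\Gamma_{-1}^-$ are literally the compact operators $D^\pm$ constructed in Section 4, which amounts to matching the cell-wise decomposition of $\widetilde u_\pm$ furnished by Corollary \ref{cr:decomp_rl} and Theorem \ref{th:spec_decomp} with the definitions of $\K^\pm$, $I^\pm$ and $T^\pm$, keeping careful track of the identifications between the reference cell $W_0$ and the physical cells $\Omega_j^\pm$ and of the shift index $n=2$. A related point, if one instead wants a self-contained reconstruction of $u$ from $u_D$ that does not presuppose existence of the LAP solution, is to verify directly that the glued function lies in $\widetilde{H}^1_{loc}(\Omega)$: the Neumann matching across the cuts is exactly the transmission condition encoded in \eqref{eq:var_D}, but the Dirichlet matching then requires an additional local uniqueness argument on the adjacent periodicity cells $\Omega_1^+$ and $\Omega_{-1}^-$, where both candidate pieces solve the same homogeneous cell problem with homogeneous data on the horizontal edges.
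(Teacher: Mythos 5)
Your proposal is correct and follows essentially the same route as the paper: the restriction of the LAP solution solves the truncated problem via the Section~4 construction together with Green's identity, and the converse direction is closed by a uniqueness argument. The only organizational difference is that you deduce the converse from the uniqueness of the truncated problem (Theorem~\ref{th:well_posedness}) after identifying $u_D=u|_D$, whereas the paper's proof independently extends the truncated solution cell by cell via $\K_{n,r}^++I_{n,r}^+$, checks continuity of the trace and normal derivative across $\Gamma_2^+$, and then invokes uniqueness of the LAP solution under Assumption~\ref{asp2}; both arguments rest on the same Fredholm alternative, and the self-contained gluing you sketch in your final paragraph is precisely what the paper's proof carries out.
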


\begin{proof}
 
 Suppose $u$ is the LAP solution of \eqref{eq:waveguide}. From the constructions above,  the solution $u\big|_{D}$ is an element in $V_D$ and satisfies \eqref{eq:var_D} which is equivalent to \eqref{eq:var_exa}. 
 
 Suppose $u$ is the solution of \eqref{eq:var_exa}, then $u$ satisfies $\Delta u+k^2 qu=0$ in $\Omega_0^+\cup\Omega_1^+$ with $u=0$ on the lower and upper boundaries of $\Omega_+$.  Let $\widetilde{u}_+(x):=\mathcal{X}_+(x_1)u(x)$ and extend it by $0$ to $\widetilde{\Omega}_+\setminus\Omega_+$. Define the domain $\widetilde{\Omega}_+^-$ by $\widetilde{\Omega}_+\setminus\left(\Omega_+\setminus(\overline{\Omega_0^+\cup\Omega_1^+})\right)$, then $\widetilde{u}_+$ satisfies
 \[
  \Delta \widetilde{u}_++k^2 q_+ \widetilde{u}_+=\M^+ u\text{ in }\widetilde{\Omega}_+^-;\quad \widetilde{u}_+=0\text{ on }\partial\widetilde{\Omega}_+^-\cap\partial\Omega.
 \]
  From definition of $D^+$ and the fact that $u=\widetilde{u}_+$ in $\Omega_1^+$, 
 \[
 \left. \frac{\partial \widetilde{u}_+}{\partial x_2}\right|_{\Gamma_2^+}=T^+\gamma^+_0\left[\K_{r}^++I_{r}^+\right]\M^+ u,\quad\text{ where }\left. \widetilde{ u}_+\right|_{\Gamma_2^+}=\gamma^+_0\left[\K_{r}^++I_{r}^+\right]\M^+ u.
 \]
 We define the function $\widetilde{u}_+$ by
 \[
  \widetilde{u}_+(x_1+n,x_2):=\left[\K_{n,r}^++I_{n,r}^+\right]\M^+ u,\quad n=2,3,\dots.
 \]
Then $\widetilde{u}_+$ as well as its derivative are continuous across the boundary $\Gamma_2^+$. This implies that $\widetilde{u}_+$ is extended to be an LAP solution in $H^1_{loc}\left(\widetilde{\Omega}_+\right)$. Since $u=\widetilde{u}_+$ in $\Omega_1^+$, $u$ is also extended to $\Omega_+$ and satisfies the radiation conditions \eqref{eq:decomp_rl+}-\eqref{eq:decomp_rl-}. 

With a similar process, we also extend $u$ to $\Omega_-$ as an LAP solution. Thus $u$ is the LAP solution of \eqref{eq:waveguide}. From Assumption \ref{asp2}, there is no eigenfunction of \eqref{eq:waveguide}. Thus the LAP solution for the problem is unique. So $u$ is the unique LAP solution of \eqref{eq:waveguide}.

\end{proof}

From perturbation theory, we approximate the problem \eqref{eq:var_D} by the following modified problem:
\begin{equation}
 \label{eq:var_D_approx}
 \int_D\left[\nabla u\cdot\nabla\overline{\phi}-k^2  n u\overline{\phi}\right]\d x-\left<T^+\gamma_0^+\K_r^+\M^+u,{\phi}\right>_{L^2(\Gamma_2^+)}+\left<T^-\gamma_0^-\K_R^-\M^-u,{\phi}\right>_{L^2(\Gamma_{-1}^-)}=-\int_D f\overline{\phi}\d x
\end{equation}

\begin{theorem}
 \label{th:uni_approx}
 Let $r_m:=\exp\left(-\pi L_+\sqrt{\frac{m^2+(m+1)^2}{2}}\right)$ and $R_m:=\exp\left(\pi L_-\sqrt{\frac{m^2+(m+1)^2}{2}}\right)$ . Then the variational problem \eqref{eq:var_D_approx} has a unique solution in $V_D$ and the solution $u_m$ satisfies
 \begin{equation}\label{eq:esti_approx}
  \left\|u_m-u\right\|_{V_D}\leq C e^{-c m}\|u\|_{V_D},
 \end{equation}
where $u$ is the solution of the original problem \eqref{eq:var_D}.
\end{theorem}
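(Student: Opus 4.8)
The plan is to view \eqref{eq:var_D_approx} as a compact perturbation of \eqref{eq:var_exa} and to apply the standard stability argument for Fredholm operators. First I would recast \eqref{eq:var_D_approx} in operator form. Exactly as \eqref{eq:var_D} was written as $(A-D^+-D^-)u=F$, the modified problem \eqref{eq:var_D_approx} becomes $(A-D_m^+-D_m^-)u_m=F$, where, by the duality argument already used for $D^\pm$,
\[
 D_m^+=\left(\gamma^+\right)^*T^+\gamma_0^+\K_r^+\M^+,\qquad D_m^-=\left(\gamma^-\right)^*T^-\gamma_0^-\K_R^-\M^-,
\]
with $r=r_m$ and $R=R_m$. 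Using the identity \eqref{eq:identity}, namely $\K_{2,0}^+=\K_{2,r_m}^++I_{2,r_m}^+$ and $\K_{-2,\infty}^-=\K_{-2,R_m}^-+I_{-2,R_m}^-$, we get
\[
 D^+-D_m^+=\left(\gamma^+\right)^*T^+\gamma_0^+ I_{2,r_m}^+\M^+,\qquad D^--D_m^-=\left(\gamma^-\right)^*T^-\gamma_0^- I_{-2,R_m}^-\M^-.
\]

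Second I would estimate the operator norm of this difference. By Lemma \ref{lm:dtn_bdd} the operators $T^\pm$ are bounded on the trace spaces, the trace maps $\gamma^\pm,\gamma_0^\pm$ are bounded, and $\M^\pm$ are bounded from $\widetilde H^1(\Omega_0^\pm)$ to $L^2(\Omega_0^\pm)$. Hence, composing these bounds with the decay estimate of Theorem \ref{th:int_r} applied with $n=2$ (note the rescaling by the periods $L_\pm$ that is already absorbed into the definitions of $r_m$ and $R_m$ in the statement), we obtain
\[
 \left\|(D^+-D_m^+)+(D^--D_m^-)\right\|_{\mathcal L(V_D)}\leq C e^{-c m}
\]
for some $c>0$ independent of $m$. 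Here the key input is that $\|I_{2,r_m}^+\M^+ u\|_V\le Ce^{-\pi m}\|\M^+u\|_{L^2}\le Ce^{-\pi m}\|u\|_{V_D}$, and similarly on the left.

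Third I would conclude by a Neumann-series / Fredholm-stability argument. By Theorem \ref{th:well_posedness}, $A-D^+-D^-$ is a boundedly invertible Fredholm operator on $V_D$; let $M$ denote its inverse. Write $A-D_m^+-D_m^-=(A-D^+-D^-)\bigl(I+M(D^+-D_m^++D^--D_m^-)\bigr)$. For $m$ large enough that $\|M\|\cdot Ce^{-cm}<\tfrac12$, the second factor is invertible with inverse bounded by $2$, so $A-D_m^+-D_m^-$ is invertible and \eqref{eq:var_D_approx} has a unique solution $u_m$. Subtracting the two operator equations, $(A-D^+-D^-)(u_m-u)=(D_m^+-D^+)u_m+(D_m^--D^-)u_m$, so $u_m-u=-M\bigl((D^+-D_m^+)+(D^--D_m^-)\bigr)u_m$, whence $\|u_m-u\|_{V_D}\le \tfrac12\|u_m\|_{V_D}$; this gives $\|u_m\|_{V_D}\le 2\|u\|_{V_D}$, and feeding it back yields $\|u_m-u\|_{V_D}\le \|M\|\,Ce^{-cm}\,\|u_m\|_{V_D}\le C'e^{-cm}\|u\|_{V_D}$, which is \eqref{eq:esti_approx} (after renaming constants, and restricting to $m$ beyond the finite threshold; the estimate for the remaining finitely many $m$ holds trivially by enlarging $C$).

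The main obstacle is the second step: one must check carefully that the decay estimate of Theorem \ref{th:int_r} — stated there on the reference cell $W_0$ with unit period — transfers correctly to the two half-guides $\widetilde\Omega_\pm$ with their own periods $L_\pm$, so that the exponents $\pi L_\pm\sqrt{(m^2+(m+1)^2)/2}$ appearing in the rescaled definitions of $r_m,R_m$ are exactly the ones that make $\|I_{2,r_m}^+\|$ and $\|I_{-2,R_m}^-\|$ decay like $e^{-cm}$; the rest is routine operator-perturbation bookkeeping. One should also record that $c$ can be taken as, e.g., $c=\tfrac12\min(L_+,L_-)\pi$ or any constant below the true decay rate, uniformly in the data.
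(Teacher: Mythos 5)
Your proof is correct and follows exactly the route the paper intends: the paper omits the argument, stating only that the result ``comes directly from the convergence rate of $I^+_{r_m}$ and $I^-_{R_m}$ in Theorem \ref{th:int_r}'', and your operator-perturbation/Neumann-series bookkeeping (writing $D^\pm-D_m^\pm$ via the identity \eqref{eq:identity} and invoking Theorems \ref{th:int_r} and \ref{th:well_posedness}) is precisely the standard way to fill that in. The only caveat worth recording is that your argument yields unique solvability only for $m$ beyond a finite threshold, which you acknowledge and which is a limitation of the theorem statement rather than of your proof.
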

 The proof comes directly from the convergence rate of $I^+_{r_m}$ and $I^-_{R_m}$ in Theorem \ref{th:int_r}, thus is omitted.
 
 \vspace{0.5cm}
 
 We rewrite the boundary terms in \eqref{eq:var_D_approx} to be fit for numerical simulations. From \eqref{eq:bc} and the definitions of $\K_r^+$ and $\K_R^-$, 
 \begin{align*}
  &T^+ u_m=\sum_{j=1}^{J_{r_m}^+}\sum_{\ell=1}^{m_j^+}c^+_{j,\ell} \frac{\partial\phi_{j,\ell}^+(x)}{\partial x_1},\text{ where }u_m(x)=\sum_{j=1}^{\infty}\sum_{\ell=1}^{m_j^+}c^+_{j,\ell}\phi_{j,\ell}^+(x)\quad\text{ on }\Gamma_2^+;\\
 & T^- u_m=\sum_{j=1}^{J_{R_m}^-}\sum_{\ell=1}^{m_j^-}c^-_{j,\ell} \frac{\partial\phi_{j,\ell}^-(x)}{\partial x_1},\text{ where }u_m(x)=\sum_{j=1}^{\infty}\sum_{\ell=1}^{m_j^-}c^-_{j,\ell}\phi_{j,\ell}^-(x)\quad\text{ on }\Gamma_{-1}^-.
 \end{align*}
Thus finally \eqref{eq:var_D_approx} becomes
\begin{equation}
 \label{eq:var_D_comput}
 \begin{aligned}
 \int_D\left[\nabla u\cdot\nabla\overline{\phi}-k^2  n u\overline{\phi}\right]\d x&-\sum_{j=1}^{J_r^+}\sum_{\ell=1}^{m_j^+}c^+_{j,\ell} \int_{\Gamma_2^+}\frac{\partial\phi_{j,\ell}^+(x)}{\partial x_1}\overline{\phi}(x)\d s\\&+\sum_{j=1}^{J_r^-}\sum_{\ell=1}^{m_j^-}c^-_{j,\ell} \int_{\Gamma_{-1}^-}\frac{\partial\phi_{j,\ell}^-(x)}{\partial x_1}\overline{\phi}(x)\d s=-\int_D f\overline{\phi}\d x
 \end{aligned}
\end{equation}

From Theorem \ref{th:est_eigenvalue}, to obtain an approximation $u_m$ with the error about $e^{-cm}$, actually we only need $O(m)$ number of generalized eigenfunctions to construct the variational problem \eqref{eq:var_D_comput}. Thus the computational complexity in the formulation of the DtN map is expected to be small.

\section{Numerical implementation}

In this section, we introduce the numerical method to solve the problem \eqref{eq:var_D_comput}. The process consists of two steps. In the first section, we approximate the eigenvalues and eigenfunctions $\lambda_{j,\ell}^\pm$ and $\phi^+_{j,\ell}$ by the spectral method; in the second section, we approximate the problem \eqref{eq:var_D_approx} by a high order finite element method.
	
\subsection{Numerical approximation of eigenvalues and eigenfunctions}

In this subsection, we introduce the numerical approximation of the eigenvalues and eigenfunctions $\lambda_{j,\ell}^\pm$ and $\phi^\pm_{j,\ell}$ by the spectral method. For simplicity, we still take the reference waveguide $W$ as an example.

Recall $W_0:=(0,1)\times(0,1)$,  $\Gamma_0:=\{0\}\times(0,1)$, $\Gamma_1:=\{1\}\times(0,1)$, $\Sigma_-:=(0,1)\times\{0\}$ and $\Sigma_+:=(0,1)\times\{1\}$. 
We are looking for nontrivial solutions in $V_\p$ such that it satisfies 
\begin{equation}
\label{eq:eig_per}
 \Delta v+2\i\alpha\frac{\partial v}{\partial x_1}+(k^2q-\alpha^2)v=0\text{ in }W_0;\quad v=0\text{ on }\Sigma_-\cup\Sigma_+.
\end{equation}
We expand $v$  by the Fourier series
\begin{equation}\label{eq:v_four}
 v=\sum_{j=-\infty}^\infty\sum_{\ell=1}^\infty v_{j,\ell} \exp(\i 2\pi j x_1)\sin(\pi\ell x_2),\text{ where }v_{j,\ell}=2\int_{\Omega_0}v(x)\exp(-\i 2\pi j x_1)\sin(\pi\ell x_2)\d x.
\end{equation}
From direct calculation of the Fourier transform,
\begin{equation}
\label{eq:eig_diff}
 \Delta v+2\i\alpha\frac{\partial v}{\partial x_1}-\alpha^2v=\sum_{j=-\infty}^\infty\sum_{\ell=1}^\infty(-4\pi^2 j^2-\pi^2\ell^2-4\pi j\alpha-\alpha^2) v_{j,\ell} \exp(\i 2\pi j x_1)\sin(\pi\ell x_2).
\end{equation}
The only difficulty lies in the term $qv$. We first extend  $q$ to be an even function with respect to $x_2\in[-1,1]$, and then extend it periodically to $x_2\in\R$. Thus it has the Fourier series
\[
q=\sum_{j=-\infty}^\infty\sum_{\ell=0}^\infty q_{j,\ell} \exp(\i 2\pi j x_1)\cos(\pi\ell x_2),\text{ where } q_{j,\ell}=c_\ell\int_{\Omega_0}q(x)\exp(-\i 2\pi j x_1)\cos(\pi\ell x_2)\d x,
\]
here $c_\ell=2$ when $\ell\neq 0$ and $c_0=1$. 

Since $qv=0$ when $x_2=0,1$, it can be extended to be an odd $2$-periodic function in $x_2$ direction. Thus it is spanned by 
\begin{equation}
\label{eq:eig_conv}
 qv=\sum_{j=-\infty}^\infty\sum_{\ell=1}^\infty w_{j,\ell} \exp(\i 2\pi j x_1)\sin(\pi\ell x_2),
\end{equation}
where
\[
w_{j,\ell}= 2 \int_{\Omega_0} q(x)v(x)e^{-\i 2\pi j x_1}\sin(\pi\ell x_2)\d x=\frac{1}{2}\sum_{j'=-\infty}^\infty\sum_{\ell'=1}^\infty v_{j',\ell'}\left[q_{j-j',\ell-\ell'}-q_{j-j',\ell+\ell'}\right].
\]
Put \eqref{eq:eig_diff} and \eqref{eq:eig_conv} into \eqref{eq:eig_per}, then the coefficients satisfy the following equations:
\begin{equation}
\label{eq:system}
 \left(-4\pi^2 j^2-\pi^2\ell^2-4\pi\alpha j-\alpha^2\right)v_{j,\ell}+ \frac{k^2}{2}\sum_{j'=-\infty}^\infty\sum_{\ell'=1}^\infty v_{j',\ell'}\left[q_{j-j',\ell-\ell'}-q_{j-j',\ell+\ell'}\right]=0.
\end{equation}

To solve the above quadratic eigenvalue problem, we truncate the series \eqref{eq:v_four} for a large $N$:
\begin{equation}\label{eq:v_four_N}
 v_N=\sum_{j=-N}^N\sum_{\ell=1}^N v_{j,\ell}^N \exp(\i 2\pi j x_1)\sin(\pi\ell x_2).
\end{equation}
Let $V_N:=\left(v_{j,\ell}^N\right)_{j=-N,\cdots,N,\,\ell=1,\dots,N}$ be the vector of coefficients, then \eqref{eq:system} now becomes 
\[
 (B+\alpha A-\alpha^2 I)V_N=0,
\]
where $I$ is the identity matrix, and $B$, $A$ are matrices which comes directly from \eqref{eq:system}. To solve the quadratic eigenvalue problem, we formulate the following linearized problem. Let $W_N:=\alpha V_N$, then
\begin{equation}
\label{eq:eig_pb}
\L\left(\begin{matrix}
        V_N \\ W_N
       \end{matrix}
\right):= \left(\begin{matrix}
        B & A\\
        I & 0
       \end{matrix}
\right) \left(\begin{matrix}
        V_N \\ W_N
       \end{matrix}
\right)= \alpha \left(\begin{matrix}
        V_N \\ W_N
       \end{matrix}
\right)
\end{equation}
By solving the above linear eigenvalue problem, we can find out the corresponding eigenvalues and eigenfunctions.  

To approximate the DtN map, we need to find out all the eigenvalues and eigenfunctions in $D_M^+$ and $D_M^-$ defined in \eqref{eq:rect}, for some $M>0$.  For the real eigenvalues, we need to apply \eqref{eq:ortho}-\eqref{eq:lambdaj} to find out all the rightward and leftward propagating modes. Finally we obtain the modes $\phi_{j,\ell}^{M,N,+}$, where $\ell=1,2,\dots,M_j^+$ and $j=1,2,\dots,J_r^+$ that associate with $\Omega_+$, and modes $\phi_{j,\ell}^{M,N,-}$, where $\ell=1,2,\dots,M_j^-$ and $j=1,2,\dots,J_R^-$ that associate with $\Omega_-$. For simplicity, we reorder the modes and denote them by $$\phi_m^{M,N,+}\text{ where }m=1,2,\dots,M^+;\quad \phi_m^{M,N,-}\text{ where }m=1,2,\dots,M^-.$$ 
Now we are prepared to discretize \eqref{eq:var_D_comput} by a finite element method.



\subsection{Finite element method}

To discretize \eqref{eq:var_D_comput} by the finite element method, we generate quasi-regular triangular meshes $\M_h$ in $\Omega$, and the meshsize is $h>0$ and sufficiently small. Then we use the cubic Lagrangian element to compute the solution of \eqref{eq:var_D_comput}. 
 Let $\left\{\phi_j^{(h)}(x),j=1,2,\dots,N_h\right\}$ be the set of piecewise  quadratic basis functions with homogeneous Dirichlet boundary conditions on $\partial \Omega$, based on the mesh $\M_h$.  We approximate $u$ by 
\[
 u_{h,M,N}(x)=\sum_{j=1}^{N_h}\widehat{u}(j)\phi_j^{(h)}(x),
\]
where $\widehat{u}(j)\in\C$ are the coefficients. To discretize \eqref{eq:var_D_comput}, we still need to find out the coefficients $c_{m}^{M,N,\pm}$ from the above approximation $u_{h,M,N}\big|_{\Gamma_2^+}$ and $u_{h,M,N}\big|_{\Gamma_{-1}^-}$. 
Take $u_{h,M,N}\big|_{\Gamma_2^+}$ for example, it is sufficient to decompose $$\left.\phi_n^{(h)}\right|_{\Gamma_2^+}=\sum_{m=1}^{M^+}c^{M,N,+}_{m,n} \left.\phi_{m}^{M,N,+}(x)\right|_{\Gamma_2^+}$$
for those $\phi_n^{(h)}$ such that $\left.\phi_n^{(h)}\right|_{\Gamma_2^+}\neq 0$ almost everywhere, 
where $\phi_m^{M,N,+}$ are the eigenfunctions obtained in the previous subsection. 
 Thus $c_m^{M,N,+}$ can be obtained by solving the following linear system:
\begin{equation}
\label{eq:linear_sys}
 \left(\begin{matrix}
  a_{11}   & a_{12}&\cdots & a_{1,M^+}\\
  a_{21} & a_{22} & \cdots & a_{2,M^+}\\
  \vdots & \vdots & \vdots& \vdots\\
  a_{M^+,1} & a_{M^+,2} & \cdots & a_{M^+,M^+}
      \end{matrix}
\right) \left(\begin{matrix}
       c_{1,n}^{M,N,+}\\ c_{2,n}^{M,N,+}\\ \vdots \\ c_{M^+,n}^{M,N,+}
      \end{matrix}
\right) =\left(\begin{matrix}
       b_1\\b_2\\\vdots\\b_{M^+}
      \end{matrix}
\right),
\end{equation}
where
\[
 a_{j,\ell}=\left<\phi_\ell^{M,N,+},\phi_j^{M,N,+}\right>_{L^2(\Gamma_2^+)},\quad b_j=\left<\phi_n^{(h)},\phi_j^{M,N,+}\right>_{L^2(\Gamma_2^+)}.
\]
Note that since $\phi_m^{M,N,\pm}$ are generalized eigenfunctions, they are linearly independent. Thus the matrix above is invertible. However, this problem is always ill-posed when the number $m$ is large. Fortunately, from Theorem \ref{th:int_r} and Theorem \ref{th:est_eigenvalue}, we only need a small number of eigenfunctions due to the fast convergence rate thus the matrices are also small. The process to compute $c_{m,n}^{M,N,-}$ is similar. Then we obtain the decomposition
\[
 \left.\phi_n^{(h)}\right|_{\Gamma_2^+}=\sum_{m=1}^{M+}c^{M,N,+}_{m,n} \left.\phi_{m}^{M,N,+}(x)\right|_{\Gamma_2^+}\text{ and }\left.\phi_n^{(h)}\right|_{\Gamma_{-1}^-}=\sum_{m=1}^{M-}c^{M,N,-}_{m,n} \left.\phi_{m}^{M,N,-}(x)\right|_{\Gamma_{-1}^-}.
\]
Thus
\[
\left. \frac{\partial u_{h,M,N}}{\partial x_1}\right|_{\Gamma_2^+}=\sum_{j=1}^{N_h}\widehat{u}(j)\sum_{m=1}^{M^+}c^{N,M,+}_{m,j} \left.\frac{\partial \phi_{m}^{M,N,+}}{\partial x_1}(x)\right|_{\Gamma_2^+}:=\sum_{j=1}^{N_h}\widehat{u}(j) \psi_{j}^{M,N,+}(x)
\]
and
\[
\left. \frac{\partial u_{h,M,N}}{\partial x_1}\right|_{\Gamma_{-1}^-}=\sum_{j=1}^{N_h}\widehat{u}(j)\sum_{m=1}^{M^-}c^{N,M,-}_{m,j} \left.\frac{\partial \phi_{m}^{M,N,-}}{\partial x_1}(x)\right|_{\Gamma_{-1}^-}:=\sum_{j=1}^{N_h}\widehat{u}(j) \psi_{j}^{M,N,-}(x)
\]
where $\psi_{j}^{M,N,+}(x)=\sum_{m=1}^{M^+}c^{M,N,+}_{m,j} \left.\frac{\partial \phi_{m}^{M,N,+}}{\partial x_1}(x)\right|_{\Gamma_2^+}$ and $\psi_{j}^{M,N,-}(x)=\sum_{m=1}^{M^-}c^{M,N,-}_{m,j} \left.\frac{\partial \phi_{m}^{M,N,-}}{\partial x_1}(x)\right|_{\Gamma_{-1}^-}$ are computed before the finite element discretization.

Then the discretization of \eqref{eq:var_D_comput} is given as follows:
\begin{equation}
 \label{eq:var_Dh}
 \begin{aligned}
 \sum_{j=1}^{N_h}\widehat{u}(j)\int_D\left[\nabla \phi_\ell^{(h)}\cdot\nabla\overline{\phi_\ell^{(h)}}-k^2  n \phi_j^{(h)}\overline{\phi_\ell^{(h)}}\right]\d x
 &-\sum_{j=1}^{N_h}\widehat{u}(j)\int_{\Gamma_2^+} \psi_{j}^{M,N,+}\overline{\phi_\ell^{(h)}}\d s\\&+\sum_{j=1}^{N_h}\widehat{u}(j)\int_{\Gamma_2^-} \psi_{j}^{M,N,-}\overline{\phi_\ell^{(h)}}\d s=-\int_D f\overline{\phi_\ell^{(h)}}\d x.
 \end{aligned}
\end{equation}
By solving \eqref{eq:var_Dh}, we get the final result $u_{h,M,N}$.

The algorithm is organized as follows:
\begin{algorithm}[H]\label{alg}
\caption{Numerical simulation of the LAP solution of \eqref{eq:waveguide}.}
\begin{enumerate}
 \item Find out all the eigenfunctions $\phi_{m}^{M,N,\pm}$ by solving \eqref{eq:eig_pb} and the orthonormal process \eqref{eq:ortho}.
 \item Decide the coefficients $c_{j,\ell}^{M,N,\pm}$ by solving \eqref{eq:linear_sys}.
 \item Formulate \eqref{eq:var_Dh} from the computed eigenfunctions and coefficients.
\end{enumerate}
\end{algorithm}

\subsection{Error estimation}

The numerical analysis of Algorithm \ref{alg} consists of two parts. In the first part, we estimate the error of the eigenfunctions from \eqref{eq:eig_pb}; in the second part, we study the convergence of the finite element discretization \eqref{eq:var_Dh}.
First we make the further assumption for the refractive index.

\begin{assumption}
 \label{asp3}
 The refractive index $q$ is periodic in $\Omega_\pm$, strictly positive, and it is real analytic. 
\end{assumption}

We begin with the regularity of the solution of \eqref{eq:per} when $q$ satisfies the above assumption. From the periodicity boundary condition, the solution can be extended to the solution in the whole waveguide $W$. We summarize the Cauchy-Kowalesky theorem as follows.

\begin{theorem}[Proposition 4.2, \cite{Taylor2011}]
\label{th:CK}
 When $q$ and $g$ are real analytic functions in $W$, then the solution $v$ is real analytic. If $g=0$ in $W$, $v$ is analytic and $1$-periodic in  $x_1$  directions. 
\end{theorem}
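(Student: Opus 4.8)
The plan is to read \eqref{eq:per} (equivalently the periodized equation \eqref{eq:eig_per}) as a second-order linear equation on $W$ whose principal part is the Laplacian, hence uniformly elliptic, and then to invoke the classical interior analytic-regularity theorem for elliptic operators with real-analytic coefficients. Write
\[
 L := \Delta + 2\i\alpha\,\frac{\partial}{\partial x_1} + \bigl(k^2 q - \alpha^2\bigr),
\]
so that $v$ solves $Lv = e^{-\i\alpha x_1} g$ on $W$. The second- and first-order coefficients of $L$ are constants, and the zeroth-order coefficient $k^2 q - \alpha^2$ is real analytic on $W$ by Assumption \ref{asp3}; its top-order symbol is $-|\xi|^2$, so $L$ is elliptic. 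The right-hand side $e^{-\i\alpha x_1} g$ is real analytic whenever $g$ is, since $e^{-\i\alpha x_1}$ is entire. The cited statement (Proposition 4.2, \cite{Taylor2011}) then applies verbatim: any $H^1_{\loc}$ solution of $Lv = (\text{analytic})$ in the open set $W$ is real analytic in $W$.

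The one point deserving a word of justification is that the periodic extension of the cell solution is a genuine weak solution of $Lv = e^{-\i\alpha x_1} g$ on all of $W$, in particular across the interfaces $\Gamma_j$. Membership of $v$ in $V_\p$ forces the periodic matching of the Dirichlet traces, so the extension lies in $H^1_{\loc}(W)$; testing the variational identity \eqref{eq:per_var} against functions locally supported near $\Gamma_0\cup\Gamma_1$ forces the matching of the $x_1$-derivatives, i.e.\ the transmission condition for $L$ (the lower-order terms contribute no interface distribution, since $v\in H^1$ and $e^{-\i\alpha x_1}g\in L^2_{\loc}$); and since $q$ is $1$-periodic in $x_1$, the extended $v$ satisfies the variational identity against every locally supported test function on $W$. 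Applying interior analytic regularity then gives analyticity on $W$. When $g=0$ the extension is $1$-periodic in $x_1$ by construction and analyticity does not disturb this, which is the second assertion.

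There is no real obstacle here: the statement belongs to the standard Cauchy--Kowalevski / analytic-hypoellipticity circle, and the only care needed is the bookkeeping of the periodic extension just described. Were a self-contained argument desired in place of the citation, the cleanest route is Morrey's: first obtain interior Sobolev regularity by elliptic bootstrap (already reflected in $v\in\widetilde H^2_{\loc}$ and higher), and then upgrade to analyticity through iterated interior estimates yielding Cauchy-type bounds $\|\partial^\beta v\|_{L^2(B_r)}\le C\,\rho^{-|\beta|}\,|\beta|!$ on nested balls, absorbing the contributions of the analytic coefficient $q$ via its own such bounds. For the purposes of this paper, invoking \cite{Taylor2011} suffices.
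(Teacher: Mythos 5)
Your proposal is correct and follows the same route as the paper, which offers no proof at all and simply cites the analytic interior regularity result from \cite{Taylor2011}; reading \eqref{eq:per} as an elliptic equation with analytic coefficients and invoking that citation is exactly what is intended. The only substantive addition you make --- checking that the periodic extension of the cell solution is a genuine weak solution across the interfaces $\Gamma_j$ (Dirichlet traces match by membership in $V_\p$, Neumann traces match by testing \eqref{eq:per_var} with periodic test functions supported near $\Gamma_0\cup\Gamma_1$) --- is a correct and worthwhile piece of bookkeeping that the paper leaves implicit.
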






Since $v$ is analytic and periodic, it is expanded into the  Fourier series \eqref{eq:v_four}. Since it is well known that the Fourier coefficients for periodic analytic functions (see Sect I.4, \cite{Katzn2004}), we have the following estimation:
\[
 \left|v_{j,\ell}\right|=O\left(\exp\left(-c\sqrt{j^2+\ell^2}\right)\right).
\]
Define the finite dimensional subspace
\[
 X_N:=\left\{\phi=\sum_{j=-N}^N\sum_{\ell=1}^N \phi_{j,\ell}^N \exp(\i 2\pi j x_1)\sin(\pi\ell x_2),\right\}\subset L^2(W_0)
\]
and let $P_N$ be the projection operator from $L^2(W_0)$ to $X_N$, then $v_N=P_N v$. Thus
\[
 \|v-v_N\|_{L^2(W_0)}= \|(I-P_N)v\|_{L^2(W_0)}=O(\exp(-cN)).
\]

We are prepared to study the error estimation of the solution of the linear eigenvalue problem \eqref{eq:eig_pb}. The numerical analysis for the approximation of eigenvalues and eigenfunctions has been studied in many papers. For details we refer to equations  (94)-(96) and Section 4 in \cite{Engst2014}.

Let $\mu$ be a discrete spectrum of the operator $\L$ with the ascent $n\in\N$, which means ${\rm ker}(\L-\mu I)^{n+1}={\rm ker}(\L-\mu I)^n\supset {\rm ker}(\L-\mu I)^{n-1}$. Let $m$ be the dimension of the eigenspace corresponding to the eigenvalue $\mu$. Then we get $n$ sequence of eigenvalues $\{\mu_j^N: \,N\in\N\}$ ($j=1,2,\dots,N$) such that
\[
 \max_{1\leq j\leq   n}\left|\mu-\mu_j^N\right|=O\left( \|(I-P_N)v\|_{L^2(W_0)}^2\right)=O\left(\exp\left(-2c N/n\right)\right).
\]
Let $v$ be a generalized eigenvector of $\L-\mu I$, then for any integer $\ell\in[j,n]$, there is a generalized eigenvector $v_N$ of order $\ell$ such that
\[
 \|v-v_N\|_V=O\left(\exp(-cN(\ell-j+1)/n)\right).
\]
Note that from Theorem \ref{th:est_eigenvalue}, when $|\Im(\mu)|$ is sufficiently large, the eigenspace related to $\mu$ is of dimension one. Thus we can always choose a suitable
$\gamma>0$ such that
\[
 \left|\mu-\mu_j^N\right|\leq C\exp(-\gamma N)\text{ and }\|v-v_N\|_V\leq C \exp(-\gamma N).
\]

Let $\K_r^+$ and $\K_R^-$ be defined by replacing the eigenfunctions by the numerical computation, then we finally get the following approximation:
\[
 \left\|\K_r^+-\K_r^N\right\|,\,\left\|\K_R^--\K_R^N\right\|
 \leq C\exp(-\gamma N).
\]
Let $r$ and $R$ be defined by a fixed positive integer $M$, then from Theorem \ref{th:int_r}, we finally have:
\[
  \left\|\K_0^+-\K_r^N\right\|,\,\left\|\K_\infty^--\K_R^N\right\|=O\left(\exp(-\gamma N)+\exp(-\xi M)\right).
\]

The following regularity result comes directly from the interior regularity for elliptic equations, for details we refer to Theorem 5, Section 6.3 in \cite{Evans1998}.

\begin{theorem}
 \label{th:regularity}
 Suppose $\partial \Omega$ is composed of finite number of non-intersecting $C^4$-curves. The refractive index $q$ satisfies Assumption \ref{asp1}, \ref{asp2} and \ref{asp3}, $f\in H^2(\Omega)$ and is compactly supported. Let $u$ be the solution of \eqref{eq:var_D} in $H^1(D)$, then it lies in the space $\widetilde{H}^4(D)$.
\end{theorem}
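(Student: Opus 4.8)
The plan is to reduce the statement to classical elliptic regularity. By Theorem \ref{th:equivalent}, the solution $u$ of \eqref{eq:var_D} agrees on $D$ with the restriction of the LAP solution of \eqref{eq:waveguide}, so $u\in\widetilde{H}^2_{loc}(\Omega)$, it satisfies $\Delta u+k^2 q u=f$ in a neighbourhood of $\overline{D}$, and $u=0$ on $\partial\Omega$. I would cover $\overline{D}$ by finitely many balls of three kinds — interior balls whose closure is disjoint from $\partial\Omega$; balls centred at a point of $\partial\Omega$ lying on a single $C^4$ arc; and balls centred at a point where one of the artificial cuts $\Gamma_2^+$ or $\Gamma_{-1}^-$ meets $\partial\Omega$ — prove an $H^4$-bound on each, and patch with a subordinate partition of unity. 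Since $u$ vanishes on $\partial\Omega$, the patched estimate lands in $\widetilde{H}^4(D)$.

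On an interior ball, rewrite the equation as $\Delta u=f-k^2 q u$. Because $q$ is real analytic by Assumption \ref{asp3}, multiplication by $q$ preserves every local Sobolev space, so the interior version of the regularity theorem cited from \cite{Evans1998} first upgrades $u\in H^1_{loc}$ to $u\in H^2_{loc}$ from $f-k^2qu\in L^2_{loc}$, and then to $u\in H^4_{loc}$ from $f-k^2qu\in H^2_{loc}$, using $f\in H^2(\Omega)$. In the part of $D$ lying in $\Omega_0^+\cup\Omega_1^+$ (and the mirror cells on the left) the source $f$ vanishes and the cutoffs $\mathcal{X}_\pm$ are constant, so $\M^\pm u\equiv 0$ there and the equation is the homogeneous Helmholtz equation with analytic coefficient; by Theorem \ref{th:CK} the solution is real analytic there, which in particular shows the cuts $\Gamma_2^+,\Gamma_{-1}^-$ cost no regularity.

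On a boundary ball of the second kind I would flatten the $C^4$ arc by a $C^4$ diffeomorphism; the pulled-back equation is a second-order uniformly elliptic equation with $C^2$ coefficients, homogeneous Dirichlet data, and right-hand side in $H^2$ (again $f\in H^2$, $q\in C^\infty$), so two applications of the boundary regularity theorem from \cite{Evans1998} — each gaining two derivatives, with $C^4$ exactly matching what the second application needs — give $u\in H^4$ there. For a ball of the third kind I would argue as in the proof of Theorem \ref{th:equivalent}: passing to the enlarged domain $\widetilde{\Omega}_+^-$ (or its left counterpart), the extended function $\mathcal{X}_+ u$ solves $\Delta(\mathcal{X}_+ u)+k^2 q_+(\mathcal{X}_+ u)=\M^+ u$ with $\M^+ u=f=0$ near the cut, and it vanishes only on the genuinely $C^4$ piece of the boundary; hence near such a point $\Gamma_2^+$ is an interior line, the point is an ordinary $C^4$ boundary point, and the second-kind argument applies.

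The only real obstacle is this third kind of point: a priori the junction of an artificial cut $\Gamma_2^\pm$ with $\partial\Omega$ is a Lipschitz corner of $D$, and corners generically destroy $H^4$-regularity. The resolution, and the step to get right, is that these cuts are not true boundaries — $u$ continues smoothly across them as an LAP solution of a homogeneous equation with analytic coefficients, so the effective domain near such a point is smooth and the corner is spurious. Everything else is a routine bootstrap, and the hypotheses are sharp for it: $f\in H^2(\Omega)$ compactly supported and $q$ analytic give precisely the two-step gain from $H^2$ up to $H^4$, while $\partial\Omega\in C^4$ is the regularity required by the $m=2$ case of the boundary estimate.
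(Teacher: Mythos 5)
Your proof is correct and follows the same route the paper intends: the paper's entire argument is a one-sentence citation of the interior elliptic regularity theorem of \cite{Evans1998}, and your bootstrap ($L^2\to H^2\to H^4$ using $f\in H^2$ and the smoothness of $q$) is exactly what that citation is standing in for. You go further than the paper by supplying the boundary-flattening argument (where the $C^4$ hypothesis on $\partial\Omega$ is actually used) and by explaining why the junctions of the artificial cuts $\Gamma_2^+$, $\Gamma_{-1}^-$ with $\partial\Omega$ are harmless --- namely that $u$ coincides with the LAP solution and so extends across the cuts as a solution of the homogeneous equation --- which are precisely the points the paper's bare appeal to \emph{interior} regularity leaves unaddressed.
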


Based on above regularity results, we study the convergence of \eqref{eq:var_Dh} based on the finite element method. Define the finite dimensional subspace
\[
 Y_h:=\left\{\phi(x)=\sum_{\ell=1}^L \widehat{\phi}(\ell)\psi_\ell(x)\right\}\subset \widetilde{H}^1(D).
\]
Let $u_h$ be the solution of \eqref{eq:var_D} in $Y_h$, then from Theorem 4.7.3 in  \cite{Brenn1994}, we have the following error estimation:
\begin{equation}
 \label{eq:err_fem}
 \|u-u_h\|_{L^2(D)}\leq Ch^4\|u\|_{H^4(D)}
\end{equation}
Let $u_{N,M,h}$ be the solution of \eqref{eq:var_D_comput} where the DtN maps are chosen as above. Then
\begin{equation}
 \label{eq:err_total}
 \left\|u-u_{N,M,h}\right\|_{L^2(D)}\leq Ch^4+C e^{-\gamma N}+Ce^{-\xi M}.
\end{equation}

\section{Numerical examples}

To illustrate the efficiency of Algorithm \ref{alg}, we show two numerical examples. Note that to guarantee the sufficient regularity of the solutions, we only consider very smooth domains and  refractive indexes.

\begin{example}
\label{eg1}
 We consider the following problem in the planar waveguide $W=\R\times(0,1)$:
 \[
 \Delta u+k^2 q u=f\text{ in }W;\quad u=0\text{ on }\partial W.
\]
 The refractive index is represented by $q=q_1+q_2$, where $q_1$ is a $1$-periodic function:\begin{align*}
   q_1(x)=&2+\left[(2-4\i)e^{-14\pi\i x_1}+(2+4\i)e^{14\pi\i x_1}\right]\cos(\pi x_2)+3 \left[e^{-6\pi\i x_1}+ e^{6\pi\i x_1}\right]\cos(2\pi x_2)\\&+\left[(1+0.2\i)e^{-2\pi\i x_1}+(1-0.2\i)e^{2\pi\i x_1}\right]\cos(3\pi x_2).
 \end{align*}
 Both $q_2$ and $f$ are compactly supported functions:
\begin{equation*}
q_2(x)=\begin{cases}
		0,\quad |x-b_0|>0.15;\\
		2,\quad 0.1<|x-b_0|<0.15;\\
		2\zeta(|x-b_0|;0.1,0.15),\quad\text{otherwise;}
	\end{cases}\quad 	f(x)=\begin{cases}
	0,\quad |x-c_0|>0.3;\\
	3,\quad 0.1<|x-c_0|<0.3;\\
	3\,\zeta(|x-c_0|;0.1,0.3),\quad\text{otherwise;}
\end{cases}
\end{equation*}
where $b_0=(0.2,0.2)^\top$,  $c_0=(0.1,0.4)^\top$ and $\zeta(t)$ is a {$C^4$}-continuous function defined {by}
\begin{equation*}
	\zeta(t;a,b)=\begin{cases}
		1,\quad t\leq a;\\
		0,\quad t\geq b;\\
		1-\left[\int_{\tau=a}^b (\tau-a)^4(\tau-b)^4\d \tau\right]^{-1}\left[\int_{\tau=a}^t (\tau-a)^4(\tau-b)^4\d \tau\right],\, a<t<b.
	\end{cases}
\end{equation*}
For the visualization of the waveguide we refer to Figure \ref{fig:sample_eg1}
\end{example}

\begin{figure}[H]
\begin{center}
 \includegraphics[width=0.9\textwidth]{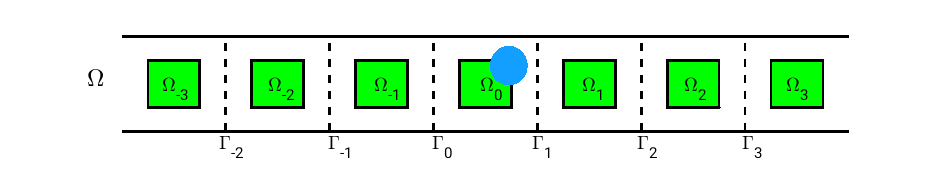}
 \caption{Structure of the waveguide for Example \ref{eg1}.}
 \label{fig:sample_eg1}
 \end{center}
\end{figure}

\begin{example}
 \label{eg2}
 The domain $\Omega$ is defined by three parts (see Figure \ref{fig:sample}). The left guide $\Omega_-=(-\infty,-2)\times(-0.5,0.5)$ and the refractive index is given by
 \[
   q_-(x_1,x_2)=q_1(x_1,x_2+0.5).
 \]
The right guide $\Omega_+=(0,+\infty)\times(-0.25,0.25)$ and the refractive index is
\begin{align*}
 q_+(x)=&3+\left[2\i e^{-20\pi\i x_1}-2\i e^{20\pi\i x_1}\right]\cos(4\pi(x_2+0.25))\\&+\left[(0.3-0.8\i)e^{-4\pi\i x_1}+(0.3+0.8\i)e^{4\pi\i x_1}\right]\cos(10\pi(x_2+0.25)).
\end{align*}
The domain $\Omega_0$ is an annulus with center $(-1,0)$, large radius $1$ and small radius. The domain $\Omega_0$ is connected with $\Omega_+$ and $\Omega_-$ by an open domain and then composes the whole domain $\Omega$. We also assume that the boundary of $\Omega$ is composed of three disjoint $C^4$-smooth boundaries. In particular, let the circle with center $(-1,0)$ and radius $0.5$ be denoted by $C_0$. Then the problem we are considering is 
\[
 \Delta u+k^2 q u=0\text{ in }\Omega;\quad u=0\text{ on }\partial\Omega\setminus C_0;\quad u=f\text{ on }C_0,
\]
where
\[
 f(x)=\exp\left(\i\cos(0.5)x_1+\i\sin(0.5)x_2\right).
\]
\end{example}

To compute reference solutions, we apply the recursive doubling procedure which was proposed in \cite{Ehrhardt2009a}. The method is based on the finite element method introduced in Section 5.2 with meshsize $0.005$. We first compute the DtN map from the recursive doubling procedure with an extrapolation technique, and then solve \eqref{eq:var_D} by the finite element method. The solution is denoted by $u_r$ and is treated as the ``exact solution''.

Then we compute the numerical results by Algorithm \ref{alg}. To guarantee the accuracy of the eigenvalues and eigenvectors, we fix $N=128$ and only vary the parameter $M$. First, we use a spectral method to find out all the eigenfunctions that are corresponding to eigenvalues that in $D_M^+$ or $D_M^-$. For the waveguide in Example \ref{eg1}, since the height of the waveguide is $1$, we choose $D_M^+:=[-\pi,\pi]+\i[0,M\pi)$ and $D_M^-=[-\pi,\pi]+\i(-M\pi,0]$; while for the left guide in Example \ref{eg2}, we still choose $D_M^-=[-\pi,\pi]+\i(-M\pi,0]$ but for the right guide, since the height is $0.5$, we choose $D_M^+:=[-\pi,\pi]+\i[0,2M\pi)$. For the finite element approximation, the meshsize $h$ is fixed to be $0.005$. We compare the relative norm between the numerical solution $u_M$ and the reference solution $u_r$:
\[
 \rm{error}:=\frac{\|u_M-u_r\|_{L^2(D)}}{\|u_r\|_{L^2(D)}}.
\]
For the results we refer to Table \ref{table:eg_error}. Note that we are only interested in the dependence of the error on the parameter $M$, since $N$ is already sufficiently large so the error brought by $N$ is ignored, while the dependence on $h$ is a standard topic in finite element method and it is not an important topic in this paper. The errors for both examples stay at a relatively low level. For the first example, the relative error stays around $4.5\times 10^{-4}$; for the second one, the relative error first decays significantly as $M$ increases, but then stays around $1.8\times 10^{-4}$. This fact can be explained by the error brought by the finite element method, or the recursive doubling procedure, or in other words, not explicit reference solutions.
For the visualization of the numerical result for Example 2, we refer to Figure \ref{fig:sample_eg2}.

\begin{table}[H]
\centering
 \label{table:eg_error}
 \begin{tabular}{|c|c|c|c|c|c|}
 \hline
  $M$ &$1$&$2$&$3$&$4$&$5$\\
  \hline
 Example 1  & $4.5$E$-4$ & $4.5$E$-4$ & $4.5$E$-4$ & $4.5$E$-4$&$4.5$E$-4$\\
 \hline
 Example 2 & $2.1$E$-3$&$8.2$E$-4$&$1.8$E$-4$&$1.9$E$-4$&$1.7$E$-4$\\
   \hline
 \end{tabular}
\caption{Relative errors of Examples.}
\end{table}

At the same time, we are also interested in the convergence rate of the algorithm with respect to the parameter $M$. To this end, we compute the numerical solution for $M=10$, and let $u_{10}$ to be the ``exact solution''. Then we compare the relative norm between the numerical solution $u_M$ and the reference solution $u_{10}$:
\[
 \rm{error}:=\frac{\|u_M-u_{10}\|_{L^2(D)}}{\|u_{10}\|_{L^2(D)}}.
\]
The relative errors are shown in  Table \ref{table:eg_conv} and the dependence of the logarithm of the errors and $M$ is shown in Figure \ref{fig:conv}. We can roughly see the linear dependence of the logarithm of the errors on the parameter $M$. Thus the error decays exponentially with respect to $M$. Finally we also want to mention that, the number of eigenfunctions depends linearly on $M$, i.e., for both half-guides, the number of eigenfunctions related to the eigenvalues lying in $D_M^+$ (or $D_M^-$) is always $M$. So we don't need to solve a large ill-posed linear system \eqref{eq:linear_sys} to achieve an accurate solution.

\begin{figure}[H]
\begin{center}
 \includegraphics[width=1.0\textwidth]{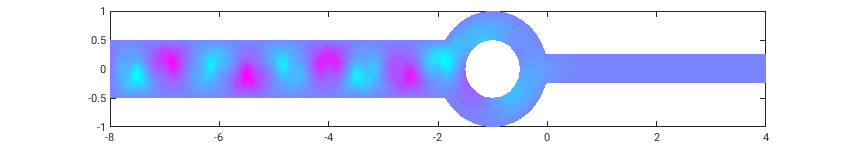}
 \caption{Numerical solution of Example \ref{eg2}.}
 \label{fig:sample_eg2}
 \end{center}
\end{figure}

\begin{table}[H]
\centering
 \label{table:eg_conv}
 \begin{tabular}{|c|c|c|c|c|}
 \hline
  $M$ &$2$&$4$&$6$&$8$\\
  \hline
 Example 1  & $6.7$E$-6$&$2.9$E$-6$&$2.4$E$-6$&$5.6$E$-7$\\
 \hline
 Example 2  & $8.2$E$-4$&$1.4$E$-4$&$8.2$E$-5$&$4.6$E$-5$\\
   \hline
 \end{tabular}
\caption{Convergence of the problem.}
\end{table}

\begin{figure}[H]
\begin{center}
\begin{tabular}{c c}
 \includegraphics[width=0.45\textwidth]{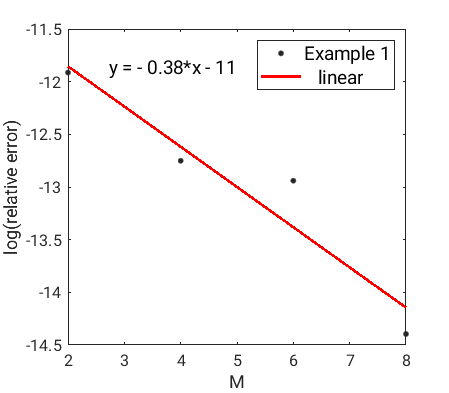}&\includegraphics[width=0.45\textwidth]{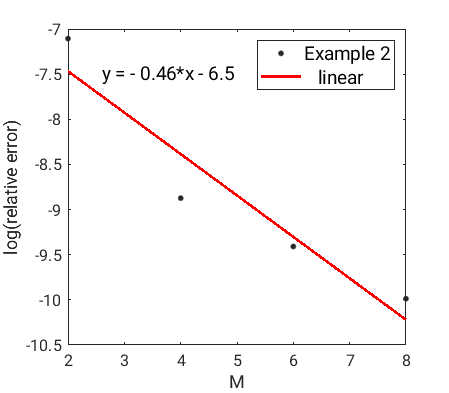}\\
 Example 1 & Example 2
\end{tabular}
 \caption{Convergence rates for Example 1 and 2.}
 \label{fig:conv}
 \end{center}
\end{figure}

\section*{Acknowledgment}
This work is funded by the Deutsche Forschungsgemeinschaft (DFG, German Research Foundation) – Project-ID 258734477 – SFB 1173

\bibliographystyle{plain}
\bibliography{ip-biblio}
\end{document}